\newcommand{\pr}[1]{#1^{\prime}}
\newcommand{\del}{\partial}
\newcommand{\dist}{\mathrm{dist}}
\newcommand{\supp}{\mathrm{supp}}
\newcommand{\rmIm}{\mathrm{Im}}
\newcommand{\wtilde}[1]{\widetilde{#1}}
\newcommand{\bR}{\mathbb{R}}
\newcommand{\bN}{\mathbb{N}}
\newcommand{\bH}{\mathbb{H}}
\newcommand{\bC}{\mathbb{C}}
\newcommand{\bP}{\mathbb{P}}
\newcommand{\bE}{\mathbb{E}}
\newcommand{\cF}{\mathcal{F}}
\newcommand{\GFF}{\mathrm{GFF}}
\newcommand{\law}{\mathrm{(law)}}
\newcommand{\ul}[1]{\underline{#1}}
\newcommand{\ol}[1]{\overline{#1}}
\newcommand{\bsl}{\backslash}
\newcommand{\hcap}{\mathrm{hcap}}
\newcommand{\fh}{\mathfrak{h}}
\newcommand{\cC}{\mathcal{C}}
\newcommand{\cU}{\mathcal{U}}
\theoremstyle{plain}
\newtheorem{thm}{Theorem}[section]
\newtheorem{lem}[thm]{Lemma}
\newtheorem{prop}[thm]{Proposition}
\theoremstyle{definition}
\newtheorem{defn}[thm]{Definition}
\theoremstyle{remark}
\newtheorem{rem}[thm]{Remark}
\title[]{Three phases of multiple SLE driven by non-colliding Dyson's Brownian motions}
\date{\today}
\author{Makoto Katori}
\address[Makoto KATORI]{Department of Physics, Faculty of Science and Engineering, Chuo University, Kasuga, Bunkyo-ku, Tokyo 112-8551, Japan}
\email{katori@phys.chuo-u.ac.jp}
\author{Shinji Koshida}
\address[Shinji KOSHIDA]{Department of Physics, Faculty of Science and Engineering, Chuo University, Kasuga, Bunkyo-ku, Tokyo 112-8551, Japan. Department of Mathematics and Systems Analysis, Aalto University, 00076 Espoo, Finland}
\email{koshida@phys.chuo-u.ac.jp shinji.koshida@aalto.fi}
\begin{document}

\begin{abstract}
The present paper is concerned with properties of multiple Schramm--Loewner evolutions (SLEs) labelled by a parameter $\kappa\in (0,8]$.
Specifically, we consider the solution of the multiple Loewner equation driven by a time change of Dyson's Brownian motions in the non-colliding regime.
Although it is often considered that several properties of the solution can be studied by means of commutation relations of SLEs and the absolute continuity,
this method is available only in the case that the curves generated by commuting SLEs are separated.
Beyond this restriction, it is not even obvious that the solution of the multiple Loewner equation generates multiple curves.
To overcome this difficulty, we employ the coupling of Gaussian free fields and multiple SLEs.
Consequently, we prove the longstanding conjecture that the solution indeed generates multiple continuous curves.
Furthermore, these multiple curves are (i) simple disjoint curves when $\kappa\in (0,4]$, (ii) intersecting curves when $\kappa\in (4,8)$, and (iii) space-filling curves when $\kappa=8$.
\end{abstract}

\subjclass[2020]{60D05, 60J67, 82C22}
\keywords{Multiple Schramm--Loewner evolution, Dyson's Brownian motion model, Gaussian free field}

\maketitle

\section{Introduction}
\subsection{Backgrounds}
Since its introduction in \cite{Schramm2000}, Schramm--Loewner evolution (SLE) has been playing one of the central roles in studying critical systems in two dimensions. It is a theory of a random continuous curve as a stochastic analogue of the classical Loewner theory, which we briefly recall. Let $\bH$ be the complex upper half plane. We consider the chordal Loewner equation driven by a continuous function $\xi \colon[0,\infty)\to\bR$:
\begin{equation}
\label{eq:Loewner}
	\frac{d}{dt}g_{t}(z)=\frac{2}{g_{t}(z)-\xi (t)},\quad t\geq 0, \quad g_{0}(z)=z\in\bH.
\end{equation}
For each $z\in \bH$, we define
\begin{equation*}
	\tau_{z}:=\sup\set{t\ge 0: \mbox{The solution of (\ref{eq:Loewner}) is well defined up to $t$},\ g_{t}(z)\in\bH},
\end{equation*}
and, for each $t\geq 0$, we set $K_{t}:=\set{z\in\bH|\tau_{z}\leq t}$. Then, $g_{t}$ is the conformal map from $\bH\bsl K_{t}$ to $\bH$ under the hydrodynamical normalization: $\lim_{z\to\infty}|g_{t}(z)-z|=0$.

A particularly significant example is the case when the driving function is given by $\xi (t)=B_{\kappa t}$, $t\geq 0$, where $(B_{t}:t\geq 0)$ is a standard Brownian motion and $\kappa>0$. In this case, almost surely, the limit $\eta (t):=\lim_{\varepsilon\searrow 0}g_{t}^{-1}(\xi (t)+\sqrt{-1}\varepsilon)\in\ol{\bH}$ exists for all $t\geq 0$, $\eta \colon [0,\infty)\to\ol{\bH}$ is a continuous curve called an SLE-curve such that $\lim_{t\to\infty}\eta (t)=\infty$, and at each $t\geq 0$, $\bH\bsl K_{t}$ is the unbounded component of $\bH\bsl \eta (0,t]$. In particular, $\eta$ defines a probability law of a curve in $\ol{\bH}$ connecting $0$ and $\infty$. We call the probability law or the family of conformal maps $(g_{t}:t\geq 0)$ specified by the parameter $\kappa$ the SLE$(\kappa)$. It was argued in \cite{Schramm2000} that SLE$(\kappa)$ with varying $\kappa>0$ covers the probability laws on a curve in $\ol{\bH}$ connecting $0$ and $\infty$ that exhibit the {\it conformal invariance} and the {\it domain Markov property}.

The SLE-curves fall into three phases depending on $\kappa$ \cite{LawlerSchrammWerner2004,RohdeSchramm2005}. If $\kappa\in (0,4]$, $\eta$ is almost surely a simple curve. If $\kappa\in (4,8)$, $\eta$ is almost surely self-intersecting and hits $\bR$. If $\kappa>8$, $\eta$ is almost surely space-filling.

In relation to critical systems, an SLE-curve is typically a candidate for a scaling limit of a cluster interface in a critical lattice system and, in some models including the critical percolation \cite{Smirnov2001}, the uniform spanning tree and the related loop-erased random walk \cite{LawlerSchrammWerner2004}, the critical Ising model \cite{ChelkakDuminil-CopinHonglerKemppainenSmirnov2014}, this property has been proved. In this context of critical systems, it is natural to extend the notion of SLE so to describe random multiple curves, which is where the notion of multiple SLE takes place. There are several formulations of multiple SLE, especially two {\it local} formulations; one way is to define a multiple SLE as a collection of commuting Loewner chains \cite{Dubedat2007}, and the other way is to consider the multiple Loewner equation
\begin{align}
\label{eq:multiple_Loewner}
	\frac{d}{dt}g_{t}(z)&=\sum_{i=1}^{N}\frac{2}{g_{t}(z)-X_{t}^{(i)}},\quad t\ge0, & g_{0}(z)&=z\in\bH
\end{align}
driven by $N$ continuous stochastic processes $\left(X^{(i)}_{t}:t\geq 0\right)$, $i=1,\dots, N$ as was first introduced in \cite{BauerBernardKytola2005}.
(Yet another formulation of multiple SLE \cite{KozdronLawler2007, Lawler2009c, BeffaraPeltolaWu2018} is referred to as {\it global}.)
These days, the definition as commuting Loewner chains has been more often adopted in literatures (e.g. in \cite{PeltolaWu2019,Karrila2019, Karrila2020, Izyurov2020}). In fact, the definition is sufficient as far as the probability law of multiple curves is concerned, and more convenient to analyze since each curve can be studied separately. On the other hand, we can only count few papers \cite{Graham2007, RothSchleissinger2017, HottaKatori2018} where the multiple Loewner equation is discussed, and little is known about basic properties of its solution.
The study of the multiple Loewner equation was initiated by Cardy in \cite{Cardy2003a} in the {\it radial} case
under the motivation of understanding multiple cluster interfaces.
An advantage of the multiple Loewner equation is that it is more natural in the context of complex analysis and makes clearer sense as a dynamical system.
Furthermore, the multiple Loewner equation manifests the potential interrelation between cluster interfaces and many particle systems
as was already pointed out in \cite{Cardy2003a}.
These aspects of the multiple Loewner equation motivate us to study it in the present paper.

Before formulating our problem, let us see a general property of the solution to the multiple Loewner equation (\ref{eq:multiple_Loewner}).
For each $z\in\bH$, we define a stopping time
\begin{equation*}
	\tau_{z}:=\sup\set{t\ge 0: \mbox{The solution of (\ref{eq:multiple_Loewner}) is well defined up to $t$},\ g_{t}(z)\in\bH},
\end{equation*}
and for each $t\ge 0$, we set $K_{t}:=\set{z\in\bH:\tau_{z}\leq t}$. Then at each $t\ge 0$, $g_{t}$ is the hydrodynamically normalized conformal map from $\bH\bsl K_{t}$ to $\bH$. 

\subsection{Problem setting}
Let us fix $N\in\bN$, $\beta\geq 0$. The $N$-particle Dyson's Brownian motion model of parameter $\beta$ is the system of stochastic differential equations (SDEs) on $\left(Y^{\beta;(i)}_{t}:t\geq 0\right)$, $i=1,\dots, N$ that reads \cite{Dyson1962}
 \begin{equation}
 \label{eq:Dyson}
 	dY^{\beta;(i)}_{t}=dB^{(i)}_{t}+\frac{\beta}{2}\sum_{\substack{j=1 \\ j\neq i}}^{N}\frac{dt}{Y^{\beta;(i)}_{t}-Y^{\beta;(j)}_{t}}, \quad t\geq 0, \quad i=1,\dots, N,
 \end{equation}
 where $(B^{(i)}_{t}:t\ge 0)$, $i=1,\dots, N$ are independent standard Brownian motions.
 It is known \cite{CepaLepingle1997} that the Dyson's Brownian motion model (\ref{eq:Dyson}) have a unique strong continuous solution for arbitrary initial conditions $Y^{\beta;(i)}_{0}=Y_{i}\in\bR$, $i=1,\dots, N$, and that the solution is almost surely non-colliding if $\beta\geq 1$.
We call a solution of the Dyson's Brownian motion model (\ref{eq:Dyson}) a system of $N$ Dyson's Brownian motions of parameter $\beta$, or simply, Dyson's Brownian motions.

In this paper, we study the solution of the multiple Loewner equation (\ref{eq:multiple_Loewner}) when the driving processes are given by $X^{(i)}_{t}=Y^{8/\kappa;(i)}_{\kappa t}$, $t\geq 0$, $i=1,\dots, N$ with $\kappa\in (0,8]$ such that the initial points $X_{i}:=X^{(i)}_{0}$, $i=1,\dots, N$ are distinct. Specifically, these driving processes are almost surely non-colliding, hence, they make sense at any $t\geq 0$. Note that, in general, Dyson's Brownian motions can be started from an identical point and multiple SLE driven by such Dyson's Brownian motions was of importance in \cite{HottaKatori2018}. In the current paper, however, we concentrate on the case when the initial conditions are given by distinct points.  We call the solution $(g_{t}:t\geq 0)$ the $N$-SLE$(\kappa)$ driven by Dyson's Brownian motions, or the $N$-SLE$(\kappa)$ for short, starting at $\bm{X}=(X_{1},\dots, X_{N})$. Let us notice, to be sure, that the driving processes satisfy the system of SDEs
\begin{align}
\label{eq:Dyson_timechanged}
	dX_{t}^{(i)}=\sqrt{\kappa}dB_{t}^{(i)}+\sum_{\substack{j=1\\ j\neq i}}^{N}\frac{4dt}{X^{(i)}_{t}-X^{(j)}_{t}},\quad t\ge 0, \quad  i=1,\dots, N.
\end{align}

\begin{rem}
It is natural to perform a time change of Dyson's Brownian motions to obtain driving processes as an analogy to the case of the ordinary SLE. On the other hand, it is more non-trivial that the strength of the interaction is fixed at $\beta=8/\kappa$. This phenomenon has been repeatedly observed in literatures \cite{Cardy2003a, BauerBernardKytola2005, Dubedat2007} and is understood as a consequence of the fact that the multiple SLE is consistent with a critical system. In fact, we will rely on coupling of multiple SLE and Gaussian free field (GFF), a representing example of critical system.
\end{rem}

\begin{rem}
In \cite{BauerBernardKytola2005,Dubedat2007}, the authors studied conditions on a multiple SLE so that it describes multiple cluster interfaces of a critical system.
In consequence, they found that multiple driving processes must be determined by a single {\it partition function}.
From this perspective, the system of SDEs (\ref{eq:Dyson_timechanged}) should be expressed as~\cite{BauerBernardKytola2005}
\begin{align*}
	dX_{t}^{(i)}=&\sqrt{\kappa}dB_{t}^{(i)}+\kappa\left(\del_{x_{i}}\log Z\right)\left(X^{(1)}_{t},\dots, X^{(N)}_{t}\right)dt+\sum_{\substack{j=1\\ j\neq i}}^{N}\frac{2dt}{X^{(i)}_{t}-X^{(j)}_{t}},\\
	&t\ge 0, \quad  i=1,\dots, N,
\end{align*}
where $Z=Z(x_{1},\dots, x_{N})$ is the partition function given by
\begin{equation*}
	Z(x_{1},\dots, x_{N})=\prod_{1\leq i<j\leq N}|x_{i}-x_{j}|^{2/\kappa},\quad x_{i}\in \mathbb{R},\; i=1,\dots, N,\quad x_{i}\neq x_{j},\; i\neq j.
\end{equation*}
\end{rem}

The goal of the present paper is to prove the following theorem:
\begin{thm}
\label{thm:main}
For each $i=1,\dots, N$, almost surely, the limit $\eta^{(i)}(t)=\lim_{\varepsilon\downarrow 0}g^{-1}_{t}(X^{(i)}_{t}+\sqrt{-1}\varepsilon)$ exists for all $t\geq 0$ and $\lim_{t\to\infty}|\eta^{(i)}(t)|=\infty$. Furthermore,
\begin{enumerate}
\item 	if $\kappa\in (0,4]$, $\eta^{(i)}$, $i=1,\dots, N$ are almost surely simple disjoint curves such that $\eta^{(i)}(0,\infty)\subset\bH$, $i=1,\dots, N$,
\item 	if $\kappa\in (4,8)$, $\eta^{(i)}$, $i=1,\dots, N$ are almost surely continuous curves that intersect themselves and $\bR$,
\item 	if $\kappa =8$, $\eta^{(i)}$, $i=1,\dots, N$ are almost surely space filling continuous curves.
\end{enumerate}
\end{thm}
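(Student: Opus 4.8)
The plan is to prove Theorem~\ref{thm:main} via the coupling of the $N$-SLE$(\kappa)$ with a Gaussian free field (GFF) with appropriate boundary data. The key structural observation is that the driving SDE~\eqref{eq:Dyson_timechanged} is precisely the one generated (in the sense of Schramm--Sheffield's imaginary geometry, or its multiple analogue) by a GFF on $\bH$ whose boundary data has a jump of size $-\lambda'$ (with $\lambda = \pi/\sqrt{\kappa}$, $\lambda' = \pi/\sqrt{16/\kappa}$, or the appropriate convention) at each of the marked points $X_1, \dots, X_N$; equivalently, the partition function $Z = \prod_{i<j}|x_i - x_j|^{2/\kappa}$ is exactly the one coming from the GFF with that boundary data, via the Loewner-chain / martingale observable identification of~\cite{BauerBernardKytola2005, Dubedat2007}. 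So the first step is: set up the GFF $h$ on $\bH$ with this piecewise-constant-plus-jumps boundary data, verify (Itô calculus) that the field $h \circ g_t^{-1} - (\text{harmonic correction depending on } g_t, X_t^{(1)}, \dots, X_t^{(N)})$ is a local martingale, so that the $N$-SLE$(\kappa)$ traces can be coupled as flow lines of $h$ emanating from the points $X_1, \dots, X_N$.

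The second step is to transfer the known properties of imaginary-geometry flow lines of a single GFF to our setting. Once the coupling is established, the curve $\eta^{(i)}$ is (a.s.\ with respect to the coupling, hence a.s.) a flow line of $h$ started from $X_i$, and the one-curve theory --- existence of the curve as a continuous path, the phase diagram in $\kappa$, the interaction rules between distinct flow lines of the same field --- is available from the literature on imaginary geometry. Concretely: existence of each $\eta^{(i)}$ as a continuous curve with $\eta^{(i)}(t) \to \infty$ follows from the continuity of single flow lines; for $\kappa \in (0,4]$ the flow lines are simple and, because distinct flow lines of a GFF started at distinct boundary points with these angle/jump data cannot cross or touch, they are disjoint and stay in the open half-plane; for $\kappa \in (4,8)$ the flow lines are the non-simple SLE$(\kappa)$-type curves, which touch themselves and the boundary; and $\kappa = 8$ is the space-filling borderline case, handled as the $\kappa \uparrow 8$ / space-filling SLE limit. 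The bulk of the work here is checking that the boundary data we obtain really does fall into the regime covered by the single-curve results, i.e.\ that the relevant boundary-data inequalities (controlling whether a flow line hits the boundary, and whether two flow lines merge/cross/bounce) hold for all configurations of the $N$ marked points and for all $t$.

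The main obstacle, and the step that requires the most care, is going from the coupling (which a priori only gives that $\eta^{(i)}$ is \emph{a} measurable function of $h$ satisfying the martingale property) to the statement that $\eta^{(i)}$ is \emph{the} flow line, and in particular to establishing continuity of the trace \emph{uniformly in the multiple-curve interaction}. In the single-curve imaginary geometry this is Schramm--Sheffield/Miller--Sheffield; in the multiple case one must rule out pathologies coming from the interaction of the $N$ Loewner evolutions --- e.g.\ that the conformal maps $g_t$ do not degenerate, that $\tau_z$ behaves correctly, and that the limit defining $\eta^{(i)}(t)$ exists simultaneously for all $i$ and all $t$. A convenient route is a localization/absolute-continuity argument: away from the (a.s.\ measure-zero) times when two driving points are close, a single $\eta^{(i)}$ is locally absolutely continuous with respect to an ordinary SLE$(\kappa)$ (or SLE$(\kappa, \underline{\rho})$) for which continuity and the phase diagram are classical~\cite{LawlerSchrammWerner2004, RohdeSchramm2005, MillerSheffield-IG}, and then one patches these local statements together using the non-colliding property of the Dyson dynamics (guaranteed here since $\beta = 8/\kappa \geq 1$) to control the behavior near the bad times. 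I would organize the proof so that the coupling establishes the \emph{identity} of the curves as flow lines, the absolute continuity establishes \emph{continuity and the correct phase}, and the flow-line interaction rules establish \emph{disjointness} in case~(1); case~(3) is then obtained either by the same flow-line machinery at the critical parameter or by a separate space-filling-SLE comparison.
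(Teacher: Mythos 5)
Your high-level architecture (couple the system with the GFF $h=H-\tfrac{2}{\sqrt{\kappa}}\sum_{i}\arg(\cdot-X_{i})$, then import the single-flow-line phase diagram and interaction rules) matches the paper's, and your plan for the phase properties of the flow lines themselves is essentially the paper's Proposition~\ref{prop:properties_flowlines} (condition on segments of the other curves, use coordinate-freeness of the coupling, reduce to a single SLE$(\kappa,\ul{\rho})$ with $\ul{\rho}=(2,\dots,2)$ and its absolute continuity with respect to SLE$(\kappa)$). The gap is in the step you yourself flag as the main obstacle: identifying the traces of the multiple Loewner equation with the flow lines and proving that those traces exist as continuous curves. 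Your proposed resolution --- local absolute continuity of a single $\eta^{(i)}$ with respect to ordinary SLE$(\kappa)$ away from ``bad times,'' patched together using the non-colliding property of the Dyson particles --- is precisely the route the paper argues is unavailable: such comparison arguments integrate to finite time only under the assumption that the curves are separated, which holds only for $\kappa\in(0,4]$ and in any case presupposes the existence of the curves. For $\kappa\in(4,8]$ the curves do touch one another while the driving points remain non-colliding, so ``times when two driving points are close'' does not localize the difficulty, and there is no neighbourhood of the tip of $\eta^{(i)}$ avoiding the other hulls in which to run such a comparison.

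The paper's actual resolution, absent from your proposal, runs in the opposite direction. One first constructs the $N$ curves as flow lines $\gamma^{(i)}$ of $h$ --- each individually an SLE$(\kappa,\ul{\rho})$ with $\ul{\rho}=(2,\dots,2)$, hence known to be a continuous transient curve --- and only then produces a Loewner chain from them. Lemma~\ref{lem:single_param} shows there is a unique common reparametrization $t\mapsto(s_{1}(t),\dots,s_{N}(t))$, obtained by solving $\frac{\del\hcap(\wtilde{K}_{\ul{s}})}{\del s_{i}}\,\frac{ds_{i}}{dt}=2$ together with the key estimate $\frac{\del\hcap(\wtilde{K}_{\ul{s}})}{\del s_{i}}=2\bigl((f^{(i)}_{s_{i}})'(\xi^{(i)}_{s_{i}})\bigr)^{2}>0$, under which the hulls generated by the union of the curves satisfy the multiple Loewner equation with all numerators equal to $2$. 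Lemma~\ref{lem:driving_processes} then shows that the coupling condition forces the resulting driving processes to be a \emph{weak solution} of (\ref{eq:Dyson_timechanged}) (a martingale computation with the harmonic part of the field, not a Girsanov comparison), and uniqueness in law for that SDE system transfers the existence of the traces and all geometric properties back to the $N$-SLE$(\kappa)$ of the theorem. Without this reparametrization-plus-weak-uniqueness step, or a substitute for it, your argument does not establish that the solution of (\ref{eq:multiple_Loewner}) driven by Dyson's Brownian motions generates curves at all --- which is the main content of the theorem.
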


\begin{rem}
\begin{enumerate}
\item 	Theorem \ref{thm:main} implies that, at each $t\geq 0$, the domain $\bH\bsl K_{t}$ is the unbounded component of $\bH\bsl\bigcup_{i=1}^{N}\eta^{(i)}(0,t]$.
\item 	Even if $\kappa>4$, two curves do not cross each other. Indeed, crossing of curves contradicts the continuity of the driving processes.
\item 	If $\kappa>4$, each curve $\eta^{(i)}$, $i=1,\dots, N$ hits the next curves, but does not hit the second next curves. In fact, if it hits a second next curve, then it contradicts the fact that the next curve in between goes to infinity.
\end{enumerate}
\end{rem}

\subsection{Strategy}
Before illustrating our strategy to prove Theorem~\ref{thm:main}, let us consider an alternative one that can be employed when $\kappa\in (0,4]$.
As we have already mentioned, there are two distinguished (local) formulations of multiple SLE; the commuting family of Loewner chains \cite{Dubedat2007} and the multiple Loewner evolution \cite{BauerBernardKytola2005}.
A possible strategy is to prove the equivalence between these two formulations.
In fact, they can be shown to be equivalent at the infinitesimal level by means of the generators of the relevant Markov processes though an explicit proof has not appeared to the authors' knowledge.
It is well-known that each member of a commuting Loewner chain of parameter $\kappa$ is absolutely continuous with respect to the SLE$(\kappa)$.
Therefore, what happens almost surely to SLE$(\kappa)$ happens almost surely to the members of the commuting Loewner chains.
In consequence, we can study the random geometry generated by the multiple SLE regardless of how it is formulated.

The significant assumption in the above argument is that the commuting family of Loewner chains defines multiple curves, or in other words,
the infinitesimal commutation relations among Loewner chains are enhanced to the finite ones.
Note that, however, infinitesimal commutation relations can be integrated to finite ones under the assumption that the curves generated by the Loewner chains are separated, which is valid only when $\kappa\in (0,4]$.
Only in such a case, the infinitesimal equivalence of the two formulations of multiple SLE is also integrated.

We hope to go further beyond this restriction.
Indeed, under the assumption that curves are separated, it is in principle impossible to study if two curves touch each other or not, as is stated in Theorem~\ref{thm:main}.
For that purpose, we have to employ another method to ensure the existence of multiple curves.
Below, we describe our strategy to prove Theorem~\ref{thm:main}.

We rely on the notion of coupling between multiple SLE and GFF, which has been studied in \cite{Dubedat2009,MillerSheffield2016a}.
The relevant GFF is given by
\begin{equation}
\label{eq:relevant_GFF}
	h=H-\frac{2}{\sqrt{\kappa}}\sum_{i=1}^{N}\arg (\cdot-X_{i}),
\end{equation}
where $H$ is the zero-boundary GFF on $\bH$, the function $\arg$ takes values in $(0,\pi)$, and $X_{i}\in\bR$, $i=1,\dots, N$, which are supposed to give initial conditions of Dyson's Brownian motions, are distinct.
Note that the GFF (\ref{eq:relevant_GFF}) is the only possibility that can be coupled with multiple SLE when $\kappa\neq 4$ \cite{Koshida2019}.
In \cite{KangMakarov2013}, it is understood that the GFF (\ref{eq:relevant_GFF}) is a combination of a central charge deformation of the zero-boundary GFF and an insertion of primary fields in the language of conformal field theory.

It is worth noting that the coupling between SLE and GFF has been studied in many other different settings; different domains and different boundary conditions in e.g.~\cite{IzyurovKytola2013,QianWerner2018, ByunKangTak2018}.

For each $i=1,\dots, N$, a random continuous curve $\gamma^{(i)}:[0,\infty)\to\ol{\bH}$ such that $\gamma^{(i)}(0)=X_{i}$ is characterized by the coupling property with the GFF $h$. Furthermore, under this coupling, the curve, $\gamma^{(i)}$, is determined by the GFF $h$ and is called the {\it flow line} of $h$ starting at $X_{i}$.
Therefore, it is ensured that there is a probability law of multiple curves $\left(\gamma^{(i)}:i=1,\dots, N\right)$ that is determined by the GFF. 
Our strategy consists of two steps:
\begin{description}
\item[{\it Step 1}] Show that the random multiple curves $\left(\gamma^{(i)}:i=1,\dots, N\right)$ exhibit the analogous properties as those in Theorem~\ref{thm:main}.
\item[{\it Step 2}] Show that the random geometry generated by $\left(\gamma^{(i)}:i=1,\dots, N\right)$ agrees in probability law with that generated by the $N$-SLE$(\kappa)$ driven by Dyson's Brownian motions $(g_{t}:t\geq 0)$.
\end{description}
As we will see in the rest of the paper, both of these steps are completed as consequences of the characterization of the random multiple curves $\left(\gamma^{(i)}:i=1,\dots, N\right)$ in terms of coupling with the GFF.
We stress that, though the {\it Step 1} is thought of a slight extension of the preceding work \cite{MillerSheffield2016a}, the {\it Step 2} gives a new insight on study of multiple SLE.
Let us describe these steps in more precise language.

\subsubsection*{Step 1}
It should be obvious that the goal of this step is to prove the following statements.
\begin{prop}
\label{prop:properties_flowlines}
We have $\lim_{t\to\infty}|\gamma^{(i)}(t)|=\infty$ a.s. for every $i=1,\dots, N$. Furthermore,
\begin{enumerate}
\item 	if $\kappa\in (0,4]$, $\gamma^{(i)}$, $i=1,\dots, N$ are almost surely disjoint simple curves such that $\gamma^{(i)}(0,\infty)\subset\bH$, $i=1,\dots, N$,
\item 	if $\kappa\in (4,8)$, $\gamma^{(i)}$, $i=1,\dots, N$ almost surely intersect themselves and $\bR$,
\item 	if $\kappa=8$, $\gamma^{(i)}$, $i=1,\dots, N$ are almost surely space-filling.
\end{enumerate}
\end{prop}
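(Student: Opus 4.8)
The plan is to realize the multiple curves $\gamma^{(i)}$ as flow lines of the GFF $h$ of \eqref{eq:relevant_GFF} and then to transfer, one at a time, the known single--curve behaviour of imaginary--geometry flow lines, while controlling the interaction between different flow lines through the absolute continuity of the GFF with respect to a GFF with piecewise--constant (or smoothly varying) boundary data. The imaginary geometry of Miller--Sheffield \cite{MillerSheffield2016a} associates to a GFF with boundary data of the stated form a flow line with angle $0$, and in the single--marked--point case these flow lines are exactly the $\mathrm{SLE}(\kappa;\underline{\rho})$ curves whose phase is governed by $\kappa$: simple for $\kappa\in(0,4]$, boundary-- and self--touching for $\kappa\in(4,8)$, and space--filling for $\kappa=8$. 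So the first step is to check that near the starting point $X_i$ and, more globally, away from the other marked points $X_j$, $j\ne i$, the field $h$ is mutually absolutely continuous with respect to such a GFF with an admissible boundary data configuration; the boundary values jump by $\pm 2\pi/\sqrt{\kappa}$ across each $X_j$, which is exactly the regime covered by the imaginary--geometry phase classification. This already yields that each individual $\gamma^{(i)}$ is a continuous curve with $\lim_{t\to\infty}|\gamma^{(i)}(t)|=\infty$ and the correct local roughness dictated by $\kappa$: simple for $\kappa\le 4$ and self--intersecting and $\bR$--hitting for $\kappa\in(4,8)$.

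Next I would address the mutual configuration of the curves. For (1), $\kappa\in(0,4]$: the flow lines of a common GFF started from distinct boundary points with the \emph{same} angle do not cross, and since the boundary data forces each $\gamma^{(i)}$ to stay in $\bH$ after time $0$, any two of them that met would have to merge and then continue as a single flow line, contradicting that they terminate at distinct prime ends at $\infty$ (or, more directly, contradicting the continuity/non--collision of the driving processes once Step 2 is in place — but here we argue intrinsically via the flow--line merging rules). Hence they are disjoint simple curves with $\gamma^{(i)}(0,\infty)\subset\bH$. For (2) and (3), $\kappa\in(4,8]$: the self--touching of each $\gamma^{(i)}$, its hitting of $\bR$, and (for $\kappa=8$) space--fillingness are purely local/single--curve statements, so they follow from the single--marked--point results by the same absolute--continuity argument; one only has to note that the presence of the other insertions does not change the behaviour in a neighbourhood of any point not equal to some $X_j$, and that the finitely many marked points are a measure--zero exceptional set, negligible for the (space--filling or Hausdorff--dimension) statements in question.

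The main obstacle is the argument that the flow lines are genuinely well defined and continuous \emph{globally}, i.e.\ as they approach the other marked points $X_j$ and as $t\to\infty$. Locally near $X_j$ the boundary data of $h$ has a logarithmic singularity of the form $-(2/\sqrt{\kappa})\arg(\cdot-X_j)$, which is precisely the profile under which imaginary geometry still produces a continuous flow line (this is the regime treated in \cite{MillerSheffield2016a}), but one must verify that a flow line of $h$ started from $X_i$ can be continued past the (random) times at which it comes close to $X_j$, and that it does not get trapped; the cleanest route is to invoke the stability of flow lines under the relevant absolutely continuous change of the field together with the target--independence / continuity results for $\mathrm{SLE}(\kappa;\underline{\rho})$ with force points, handling $\kappa=8$ by the space--filling $\mathrm{SLE}(8;\underline{\rho})$ construction. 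Finally, $\lim_{t\to\infty}|\gamma^{(i)}(t)|=\infty$ follows because the flow line is transient: its law is absolutely continuous, away from the marked points and from $\infty$, with respect to $\mathrm{SLE}(\kappa)$, which is transient, and the $\rho$--weights coming from the finitely many force points do not destroy transience. Assembling these pieces gives Proposition \ref{prop:properties_flowlines}.
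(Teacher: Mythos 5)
Your treatment of the single-curve statements is essentially the paper's route: each $\gamma^{(i)}$ is the $\mathrm{SLE}(\kappa,\ul{\rho})$ curve with $\ul{\rho}=(2,\dots,2)$ coupled with $h$, the force points are never swallowed because the relevant differences of driving processes are absolutely continuous with respect to a time-changed Bessel process of dimension $8/\kappa+1\geq 2$ (Proposition \ref{prop:absolute_continuity_wrt_Bessel}), and absolute continuity with respect to $\mathrm{SLE}(\kappa)$ up to fixed times imports transience and the Rohde--Schramm phase classification (Proposition \ref{prop:property_H}). Since items (2) and (3) of the Proposition are per-curve statements, a finite union bound then suffices, as you indicate.

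The genuine gap is in your disjointness argument for case (1). You argue that two curves which met would have to merge, ``contradicting that they terminate at distinct prime ends at $\infty$.'' But all $N$ curves target the same boundary point $\infty$ of $\bH$, and two merged flow lines would simply continue to $\infty$ together; nothing in the setup forces them to approach $\infty$ through distinct prime ends, so no contradiction is obtained. (Merging of same-angle flow lines is precisely what \emph{does} happen in imaginary geometry when the intermediate weights are small enough, so it cannot be excluded by appeal to the merging rule itself; one must show the curves never meet.) The paper closes this by a conditioning argument for which you have all the ingredients but which you do not carry out: by the coordinate-freeness of the coupling problem (Proposition \ref{prop:coordinate_freeness}), the conditional law of $\gamma^{(i)}$ given segments $\gamma^{(j)}(0,\sigma_{j}]$, $j\neq i$, up to arbitrary stopping times is again the flow line started at $X_{i}$ of a GFF on the slit configuration $\cC_{\ul{\sigma}}$ with the \emph{same} boundary data $(\ul{a},\chi)$; after uniformizing to $\bH$ this is again an $\mathrm{SLE}(\kappa,\ul{\rho})$ with $\ul{\rho}=(2,\dots,2)$, which for $\kappa\in(0,4]$ is almost surely a simple curve with trace in the open upper half plane, hence avoids the boundary of the slit domain and in particular the segments $\gamma^{(j)}(0,\sigma_{j}]$. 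Letting the stopping times be arbitrary yields disjointness. This conditional-law step (rather than the merging shortcut) is what you need to make (1) rigorous; it is also what justifies, for $\kappa\in(4,8]$, the finer interaction statements about which neighbouring segments a given curve can and cannot hit.
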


We will prove Proposition~\ref{prop:properties_flowlines} in Section~\ref{sect:flowline}. The idea closely follows~\cite{MillerSheffield2016a}, where we will consider conditional laws of a single curve given the others taking advantage of the coordinate-free formulation of the coupling (see Proposition~\ref{prop:coordinate_freeness}).

\subsubsection*{Step 2}
Note that, for each $i=1,\dots, N$, the curve $\gamma^{(i)}$ naturally carries a parametrization $s_{i}\geq 0$ in such a way that the half-plane capacity (see Sect.~\ref{sect:prelim} for definition) of $\gamma^{(i)}(0,s_{i}]$ is $2s_{i}$. Suppose that the curves $\gamma^{(i)}$, $i=1,\dots, N$ are adapted to  filtrations $(\cF^{(i)}_{s_{i}})_{s_{i}\in\bR_{\geq 0}}$, $i=1,\dots, N$, respectively. When we set, for each $N$-tuple $\ul{s}=(s_{1},\dots, s_{N})\in\bR_{\geq 0}^{N}$, the $\sigma$-algebra $\cF_{\ul{s}}=\bigvee_{i=1}^{N}\cF^{(i)}_{s_{i}}$, then the collection $\left(\cF_{\ul{s}}\right)_{\ul{s}\in\bR_{\geq 0}^{N}}$ is a multi-parametric filtration of $\sigma$-algebras with respect to the natural partial order on $\bR_{\geq 0}^{N}$. The next task is to make these parameters depend on a single parameter.

\begin{lem}
\label{lem:single_param}
There exist unique functions $\bR_{\geq 0}\to \bR_{\geq 0};$ $t\mapsto s_{i}(t)$, $i=1,\dots, N$ such that the following property holds.
At each $t\geq 0$, define the subset $\wtilde{K}_{t}\subset\bH$ so that $\bH\bsl \wtilde{K}_{t}$ is the unbounded component of $\bH\bsl \bigcup_{i=1}^{N}\gamma^{(i)}(0,s_{i}(t)]$ and let $\wtilde{g}_{t}:\bH\bsl\wtilde{K}_{t}\to\bH$ be the hydrodynamically normalized conformal map. Then, the family of conformal maps $(\wtilde{g}_{t}:t\geq 0)$ solves the multiple Loewner equation
\begin{equation*}
	\frac{d}{dt}\wtilde{g}_{t}(z)=\sum_{i=1}^{N}\frac{2}{\wtilde{g}_{t}(z)-\wtilde{X}^{(i)}_{t}},\quad t\geq 0, \quad \wtilde{g}_{0}(z)=z,
\end{equation*}
where $\wtilde{X}^{(i)}_{t}=\wtilde{g}_{t}\left(\gamma^{(i)}(s_{i}(t))\right)$, $i=1,\dots, N$.
Furthermore, $s_{i}(t)\to\infty$ as $t\to\infty$, $i=1,\dots, N$.
\end{lem}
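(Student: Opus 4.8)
The plan is to assemble the flow lines into a multi-parameter Loewner chain, derive a multi-time Loewner equation for it, and then single out the desired reparametrization as the solution of an ordinary differential equation; this is in the spirit of the multiple Loewner chains of \cite{RothSchleissinger2017}. First, for $\ul{s}=(s_{1},\dots,s_{N})\in\bR_{\geq 0}^{N}$ let $\wtilde{K}_{\ul{s}}$ be defined so that $\bH\bsl\wtilde{K}_{\ul{s}}$ is the unbounded component of $\bH\bsl\bigcup_{i=1}^{N}\gamma^{(i)}(0,s_{i}]$, and let $\wtilde{g}_{\ul{s}}\colon\bH\bsl\wtilde{K}_{\ul{s}}\to\bH$ be the hydrodynamically normalized conformal map. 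I would record that $\ul{s}\mapsto\wtilde{g}_{\ul{s}}$ is continuous and monotone, that $\Phi(\ul{s}):=\hcap(\wtilde{K}_{\ul{s}})$ is continuous and strictly increasing in each coordinate, and that $\Phi(\ul{s})\to\infty$ as soon as some $s_{i}\to\infty$ (since $\Phi(\ul{s})\geq 2s_{i}$). Strict monotonicity uses the non-crossing of the curves from Step~1: it guarantees that the tip $\gamma^{(i)}(s_{i})$ always lies on $\partial(\bH\bsl\wtilde{K}_{\ul{s}})$, so that increasing $s_{i}$ genuinely enlarges $\wtilde{K}_{\ul{s}}$.

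Next I would derive the multi-time Loewner equation. Fixing $\ul{s}$ and increasing only the $i$-th coordinate by $\varepsilon$ (with $\vc{e}_{i}$ the $i$-th standard basis vector), one has $\wtilde{g}_{\ul{s}+\varepsilon\vc{e}_{i}}=\psi^{(i)}_{\varepsilon}\circ\wtilde{g}_{\ul{s}}$ with $\psi^{(i)}_{\varepsilon}$ the hydrodynamically normalized map removing the small hull generated by $\wtilde{g}_{\ul{s}}(\gamma^{(i)}(s_{i},s_{i}+\varepsilon])$ --- the part of the remaining hulls that gets disconnected in the process being negligible to leading order, again by non-crossing. The standard small-hull expansion gives $\psi^{(i)}_{\varepsilon}(w)=w+\frac{2\varrho_{i}(\ul{s})\varepsilon+o(\varepsilon)}{w-\wtilde{X}^{(i)}_{\ul{s}}}$, where $\wtilde{X}^{(i)}_{\ul{s}}:=\wtilde{g}_{\ul{s}}(\gamma^{(i)}(s_{i}))$ and $2\varrho_{i}(\ul{s})=\partial_{s_{i}}\Phi(\ul{s})$; writing $\wtilde{g}_{\ul{s}}=F_{i}\circ h^{(i)}_{s_{i}}$, with $h^{(i)}_{s_{i}}$ the uniformizing map of $\bH\bsl\gamma^{(i)}(0,s_{i}]$ and $F_{i}$ the ensuing map of $\bH$ minus the images of the other curves, one identifies $\varrho_{i}(\ul{s})=|F_{i}'(\xi^{(i)}_{\ul{s}})|^{2}>0$ at $\xi^{(i)}_{\ul{s}}:=h^{(i)}_{s_{i}}(\gamma^{(i)}(s_{i}))$. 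Thus $\partial_{s_{i}}\wtilde{g}_{\ul{s}}(z)=\frac{2\varrho_{i}(\ul{s})}{\wtilde{g}_{\ul{s}}(z)-\wtilde{X}^{(i)}_{\ul{s}}}$, and $\varrho_{i}$ is positive and continuous, at least away from the exceptional configurations in which the tip of curve $i$ touches another curve or $\bR$; those I would treat by approximation together with the monotone structure.

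I would then define $\ul{s}(t)$ as the solution of $\frac{ds_{i}}{dt}=1/\varrho_{i}(\ul{s}(t))$, $s_{i}(0)=0$, $i=1,\dots,N$. Local existence is clear since the right-hand side is positive and continuous, and uniqueness follows from the local Lipschitz dependence of the conformal derivatives $\varrho_{i}$ on $\ul{s}$. Along this trajectory the chain rule and the previous step give $\frac{d}{dt}\wtilde{g}_{\ul{s}(t)}(z)=\sum_{i=1}^{N}\frac{1}{\varrho_{i}(\ul{s}(t))}\cdot\frac{2\varrho_{i}(\ul{s}(t))}{\wtilde{g}_{\ul{s}(t)}(z)-\wtilde{X}^{(i)}_{\ul{s}(t)}}=\sum_{i=1}^{N}\frac{2}{\wtilde{g}_{\ul{s}(t)}(z)-\wtilde{X}^{(i)}_{\ul{s}(t)}}$, i.e.\ exactly the stated multiple Loewner equation; this in turn forces $\hcap(\wtilde{K}_{\ul{s}(t)})=2Nt$, hence $2s_{i}(t)\leq\hcap(\wtilde{K}_{\ul{s}(t)})=2Nt<\infty$, so the solution cannot explode in finite time and is global. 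Uniqueness of the reparametrization then follows by reversing the computation: any admissible family $\ul{s}(t)$ makes $\wtilde{g}_{\ul{s}(t)}$ solve the multiple Loewner equation, and differentiating and comparing partial-fraction expansions (legitimate since the $\wtilde{X}^{(i)}_{t}$ are distinct for a.e.\ $t$) yields $\dot{s}_{i}(t)\varrho_{i}(\ul{s}(t))=1$ a.e., the same ODE.

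It remains to show $s_{i}(t)\to\infty$, which I expect to be the main obstacle. Since each $s_{i}(\cdot)$ is nondecreasing and subadditivity of the half-plane capacity gives $2Nt=\hcap(\wtilde{K}_{\ul{s}(t)})\leq 2\sum_{j}s_{j}(t)$, at least one coordinate escapes to infinity, and one has to exclude that a proper nonempty subset of coordinates freezes. If $s_{i}(t)\uparrow s_{i}^{\infty}<\infty$, then $\varrho_{i}(\ul{s}(t))\to\infty$; since $h^{(i)}_{s_{i}(t)}\to h^{(i)}_{s_{i}^{\infty}}$ locally uniformly and $\xi^{(i)}_{\ul{s}(t)}$ converges to a fixed real point, blow-up of $\varrho_{i}=|F_{i}'(\xi^{(i)})|^{2}$ would force the images of the other curves to accumulate at that point, i.e.\ some other curve to accumulate at the frozen tip $\gamma^{(i)}(s_{i}^{\infty})$. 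The hard part is to rule this out: using non-crossing (so that the frozen tip can at most be touched, never enclosed) together with Proposition~\ref{prop:properties_flowlines} (so that a non-frozen curve must escape to infinity and cannot linger near the frozen tip, while a single touch cannot produce unbounded conformal distortion), and analyzing the residual configurations in which frozen tips could touch one another, I expect to reach a contradiction and conclude that every $s_{i}(t)\to\infty$, which completes the proof.
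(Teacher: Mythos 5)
Your construction follows the paper's route almost verbatim: you introduce the multi-parameter hulls $\wtilde{K}_{\ul{s}}$, compute $\del_{s_{i}}\hcap(\wtilde{K}_{\ul{s}})=2\varrho_{i}(\ul{s})$ with $\varrho_{i}=\bigl((f^{(i)}_{s_{i}})'(\xi^{(i)}_{s_{i}})\bigr)^{2}$ by the Lawler--Schramm--Werner commutation-plus-scaling argument, define $\ul{s}(t)$ by the ODE $\dot{s}_{i}=1/\varrho_{i}(\ul{s}(t))$, and rule out finite-time explosion via $2s_{i}(t)\leq\hcap(\wtilde{K}_{\ul{s}(t)})=2Nt$. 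All of that is exactly what the paper does (your $F_{i}$, $h^{(i)}_{s_{i}}$ are its $f^{(i)}_{s_{i}}$, $g^{(i)}_{s_{i}}$).

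The genuine gap is the step you yourself flag as ``the main obstacle'': proving $s_{i}(t)\to\infty$. You correctly reduce it to excluding $\varrho_{i}(\ul{s}(t))\to\infty$ along a frozen coordinate, but then propose an unfinished geometric analysis of other curves accumulating at the frozen tip. That machinery is not needed, because $\varrho_{i}$ is uniformly bounded by $1$: $f^{(i)}_{s_{i}}$ is the hydrodynamically normalized mapping-out function of a compact $\bH$-hull $A^{(i)}_{s_{i}}$, and for any such map $g_{A}$ and any $x\in\bR\setminus\ol{A}$ one has $g_{A}'(x)\leq 1$. (Write $g_{A}^{-1}(w)=w+\int_{\bR}\frac{\nu(du)}{u-w}$ with $\nu$ a positive measure supported on a compact subset of $\bR$ — the Nevanlinna representation of a map of $\bH$ into itself with $g_{A}^{-1}(w)-w\to 0$ at infinity — so that $(g_{A}^{-1})'\geq 1$ on $\bR$ off $\supp\nu$, hence $g_{A}'\leq 1$.) Consequently $\dot{s}_{i}(t)=1/\varrho_{i}(\ul{s}(t))\geq 1$ wherever defined, so $s_{i}(t)\geq t\to\infty$ outright, with no case analysis of touching configurations. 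The paper's own proof is equally terse at this point — it merely asserts that divergence of $\del_{s_{i}}\hcap(\wtilde{K}_{\ul{s}})$ is absurd — and the bound $\del_{s_{i}}\hcap\leq 2$ is precisely what justifies that assertion. With this one-line fact supplied in place of your accumulation argument, your proof is complete and coincides with the paper's.
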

Remarkably, the numerators of the summands are normalized as $2$.

The remaining task is to show that the driving processes $\left(\wtilde{X}^{(i)}_{t}:t\geq 0\right)$, $i=1,\dots, N$ give a weak solution to the system of SDEs (\ref{eq:Dyson_timechanged}), which is stated as follows:
\begin{lem}
\label{lem:driving_processes}
At each $t\geq 0$, $\ul{s}(t)=(s_{1}(t),\dots, s_{N}(t))$ is an $\left(\cF_{\ul{s}}\right)_{\ul{s}\in\bR_{\geq 0}^{N}}$-stopping time, and when we set $\cF_{t}=\cF_{\ul{s}(t)}$, $t\geq 0$, the collection $\left(\cF_{t}\right)_{t\geq 0}$ forms a filtration of $\sigma$-subalgebras.
The processes $\left(\wtilde{X}^{(i)}_{t}:t\geq 0\right)$, $i=1,\dots, N$ are adapted to this filtration and satisfy the system of SDEs (\ref{eq:Dyson_timechanged}).
\end{lem}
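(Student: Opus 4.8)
The plan is to equip the flow lines with the single global time $t$ produced by Lemma~\ref{lem:single_param}, to check that the resulting objects are genuine continuous semimartingales adapted to a filtration satisfying the usual conditions, and then to read off the stochastic differential of $\wtilde{X}^{(i)}_{t}=\wtilde{g}_{t}(\gamma^{(i)}(s_{i}(t)))$ from the coupling with the GFF~(\ref{eq:relevant_GFF}).

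\textbf{Step 1 (stopping times and the filtration $(\cF_{t})$).} From the construction in Lemma~\ref{lem:single_param}, for fixed $t$ the tuple $\ul{s}(t)$ is pinned down by equating the half-plane capacities of the hulls generated by the pieces $\gamma^{(i)}(0,s_{i}(t)]$, $i=1,\dots,N$; since $u\mapsto\hcap$ of the hull of $\gamma^{(i)}(0,u]$ is a continuous increasing $\cF^{(i)}_{u}$-adapted process, the event $\set{s_{i}(t)\le u_{i},\ i=1,\dots,N}$ lies in $\cF_{\ul{u}}$ for every $\ul{u}\in\bR_{\ge 0}^{N}$, so $\ul{s}(t)$ is an $(\cF_{\ul{s}})$-stopping time and $\wtilde{X}^{(i)}_{t}$ is $\cF_{t}$-measurable. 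That $(\cF_{t})_{t\ge 0}$ is an increasing, right-continuous family rests on the fact that the flow lines $\gamma^{(i)}$ are measurable functionals of the single field $h$, which furnishes the conditional-independence (``commutation'') property of the multiparameter filtration $(\cF_{\ul{s}})$ needed for the multiparameter optional sampling theorem. Continuity of $t\mapsto\wtilde{X}^{(i)}_{t}$ then follows from continuity of $t\mapsto s_{i}(t)$, of the curves $\gamma^{(i)}$, and of $\wtilde{g}_{t}$.

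\textbf{Step 2 (semimartingale property).} Conditionally on $\bigvee_{j\ne i}\cF^{(j)}_{\infty}$, the characterisation of $\gamma^{(i)}$ as the flow line of $h$ from $X_{i}$ together with the coordinate-free form of the coupling (Proposition~\ref{prop:coordinate_freeness}) identifies $\gamma^{(i)}$, in the uniformising coordinate of $\bH\bsl\bigcup_{j\ne i}\gamma^{(j)}$, with an $\mathrm{SLE}_{\kappa}(\ul{\rho})$ process whose force points sit at the images of the tips $\gamma^{(j)}(s_{j})$, $j\ne i$, with weights dictated by the coefficient $2/\sqrt{\kappa}$ in~(\ref{eq:relevant_GFF}); in particular its Loewner driving function in its own capacity time is a continuous semimartingale. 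Passing to the global time and the $\bH$-coordinate by the coordinate-change (Loewner-flow composition) rule, $\wtilde{X}^{(i)}$ is obtained from this driving function by a conditionally deterministic, smooth transformation governed by a Loewner ODE carrying the other tips as marked points; hence $\wtilde{X}^{(i)}$ is a continuous $(\cF_{t})$-semimartingale, say $d\wtilde{X}^{(i)}_{t}=d M^{(i)}_{t}+d A^{(i)}_{t}$.

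\textbf{Step 3 (identification of the SDE).} By the Markov property of the coupling, the conditional law of $h$ given $\cF_{t}$ is that of a GFF on $\bH\bsl\wtilde{K}_{t}$ with the boundary data of $h$ transported along $\wtilde{g}_{t}$, so for fixed $z\in\bH$ the conditional mean
\begin{equation*}
	\mathfrak{h}_{t}(z)=\bE\!\left[h(z)\mid\cF_{t}\right]=-\frac{2}{\sqrt{\kappa}}\sum_{i=1}^{N}\arg\!\left(\wtilde{g}_{t}(z)-\wtilde{X}^{(i)}_{t}\right)+\chi\,\arg\wtilde{g}_{t}'(z)+\mathrm{const},\qquad \chi=\tfrac{2}{\sqrt{\kappa}}-\tfrac{\sqrt{\kappa}}{2},
\end{equation*}
is a continuous martingale in $t$. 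Applying It\^o's formula with the multiple Loewner equation $d\wtilde{g}_{t}(z)=\sum_{i=1}^{N}\frac{2\,dt}{\wtilde{g}_{t}(z)-\wtilde{X}^{(i)}_{t}}$ governing the $z$-dependence and the decomposition from Step~2, the drift of $\mathfrak{h}_{t}(z)$ must vanish for all $z$; reading this identity as a rational function of $w=\wtilde{g}_{t}(z)$ and matching the second- and first-order poles at $w=\wtilde{X}^{(i)}_{t}$ (the latter receiving contributions from the cross terms $\arg(w-\wtilde{X}^{(i)}_{t})$ paired with $2(w-\wtilde{X}^{(j)}_{t})^{-1}$, $j\ne i$) forces $d\langle M^{(i)},M^{(j)}\rangle_{t}=\kappa\,\delta_{ij}\,dt$ and $dA^{(i)}_{t}=\sum_{j\ne i}\frac{4\,dt}{\wtilde{X}^{(i)}_{t}-\wtilde{X}^{(j)}_{t}}$. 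Setting $B^{(i)}_{t}:=\kappa^{-1/2}M^{(i)}_{t}$, L\'evy's characterisation shows that $(B^{(i)}:i=1,\dots,N)$ are independent standard $(\cF_{t})$-Brownian motions, whence $(\wtilde{X}^{(i)}:i=1,\dots,N)$ is a weak solution of~(\ref{eq:Dyson_timechanged}).

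\textbf{Main obstacle.} The passage in Step~3 from the GFF martingale to the driving SDE is the now-standard SLE/GFF computation and should present no essential difficulty once Steps~1--2 are in place. The delicate work is precisely in Steps~1--2: verifying that the synchronised tuple $\ul{s}(t)$ is a bona fide multiparameter stopping time and that $(\cF_{t})$ satisfies the usual conditions relies on the commuting-filtration structure inherited from $h$, and the upgrade of the a priori merely continuous process $\wtilde{X}^{(i)}$ to a semimartingale in the \emph{global} parameter $t$ (rather than in the individual capacity parameters $s_{i}$) is exactly where the conditional $\mathrm{SLE}_{\kappa}(\ul{\rho})$ description furnished by the coupling is indispensable.
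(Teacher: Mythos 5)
Your overall strategy coincides with the paper's: the heart of the argument is that the conditional mean $\fh_{t}$ of the field given $\cF_{t}$ is a continuous local martingale, and that applying It\^o's formula together with the multiple Loewner equation and matching poles at $w=\wtilde{X}^{(i)}_{t}$ identifies the drift and quadratic variation of each $\wtilde{X}^{(i)}$. Two points, however, deserve attention. First, a genuine gap in Step~3: matching the poles of the \emph{drift} of $\fh_{t}(z)$ for a single $z$ cannot give you $d\braket{M^{(i)},M^{(j)}}_{t}=\kappa\,\delta_{ij}\,dt$ for $i\neq j$, because each summand $\arg\bigl(\wtilde{g}_{t}(z)-\wtilde{X}^{(i)}_{t}\bigr)$ depends on only one driving process, so only the diagonal brackets $\braket{M^{(i)},M^{(i)}}$ enter the one-point drift. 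To kill the off-diagonal brackets --- i.e.\ to get \emph{independent} Brownian motions, which is essential for (\ref{eq:Dyson_timechanged}) --- the paper additionally establishes the two-point cross-variation identity $d\braket{\fh(z),\fh(w)}_{t}=-dG(\wtilde{g}_{t}(z),\wtilde{g}_{t}(w))$ (deduced from the Gaussianity of the conditional laws), and then reads off $d\braket{M^{(i)},M^{(j)}}_{t}=0$ for $i\neq j$ by comparing with the explicit It\^o computation of $d\braket{\fh(z),\fh(w)}_{t}$. You need to supply this two-point input; L\'evy's characterisation alone does not produce independence. (There is also a sign slip: with the paper's convention $\fh_{t}$ carries $-\chi\arg\wtilde{g}_{t}'$, not $+\chi\arg\wtilde{g}_{t}'$; with your sign the second-order poles do not cancel for $\chi=\tfrac{2}{\sqrt{\kappa}}-\tfrac{\sqrt{\kappa}}{2}$.)

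Second, your Step~2 route to the semimartingale property is different from, and shakier than, the paper's. Conditioning on $\bigvee_{j\neq i}\cF^{(j)}_{\infty}$ (the entire other curves) is not licensed by the coupling statements, which only condition on initial segments up to stopping times; moreover the change of coordinates to the global time $t$ is not ``conditionally deterministic'' given $\cF_{t}$, since all curves grow simultaneously. The paper avoids this entirely: working with the complexification $\ol{\fh}_{t}$, it evaluates at generic points $z_{1},\dots,z_{N}$ and applies the implicit function theorem to express each $X^{(i)}_{t}$ as a smooth function of the local martingales $\ol{\fh}_{t}(z_{j})$ and the finite-variation processes $g_{t}(z_{j})$, $\log g_{t}'(z_{j})$; It\^o's theorem then gives the semimartingale decomposition directly, with no appeal to a conditional $\mathrm{SLE}_{\kappa}(\ul{\rho})$ description. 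I would recommend adopting that argument, which also makes your Step~1 claims about the usual conditions unnecessary for the conclusion.
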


Lemmas~\ref{lem:single_param} and~\ref{lem:driving_processes}, which will be proved in Sect.~\ref{sect:proofs_lemmas}, prove Theorem~\ref{thm:main}.
\begin{proof}[Proof of Theorem \ref{thm:main}]
Since the probability laws of weak solutions to a system of SDEs coincide,
the probability law of $(g_{t}:t\geq 0)$ agrees with that of $(\wtilde{g}_{t}:t\geq 0)$.
Then, the properties listed in Theorem~\ref{thm:main} are manifest from Proposition \ref{prop:properties_flowlines}.
\end{proof}

We close Introduction with a few comments.
The coupling of the GFF (\ref{eq:relevant_GFF}) and the $N$-SLE$(\kappa)$ was found and discussed in our previous works \cite{KatoriKoshida2020a, KatoriKoshida2020b}, that is, we found that the multiple SLE driven by the time change of $N$ Dyson's Brownian motions of parameter $8/\kappa$ satisfying (\ref{eq:Dyson_timechanged}) is coupled with the GFF (\ref{eq:relevant_GFF}). Lemmas~\ref{lem:single_param} and \ref{lem:driving_processes} above give the converse assertion. In fact, they say that, if a Loewner chain $(\wtilde{g}_{t}:t\geq 0)$ is coupled with the GFF (\ref{eq:relevant_GFF}), then the driving processes must satisfy the system of SDEs (\ref{eq:Dyson_timechanged}). Dyson's Brownian motion model \cite{Dyson1962} is one of the most studied models in dynamical extension of random matrix theory (RMT). We emphasize that Lemmas \ref{lem:single_param} and \ref{lem:driving_processes} are not only essential in the proof of Theorem \ref{thm:main}, but also highlight potential interrelation between GFF and RMT.

In this paper, we focus on the case of $\kappa\in (0,8]$ when the driving processes are almost surely non-colliding. When $\kappa>8$, the driving processes satisfying (\ref{eq:Dyson_timechanged}) are almost surely colliding. In this case, it is natural to expect that a solution can be continued after collision in a suitable manner so that the multiple SLE is coupled with the same GFF. It turned out, however, technically difficult to deal with this colliding regime in the same method relying on GFF. In fact, when $\kappa>8$, the single parametrization in Lemma \ref{lem:single_param} will break down. It is not obvious that this event of breaking down of the single parametrization exactly corresponds to the collision of driving processes. Hence, it seems still challenging to the authors to study property of the $N$-SLE$(\kappa)$ with $\kappa>8$.

We also concentrate on the case when $X_{i}$, $i=1,\dots, N$ in (\ref{eq:relevant_GFF}) are distinct. As we already noted, initial conditions of Dyson's Brownian motions are not necessarily distinct. If two of initial conditions are identical, however, the method we employ in this paper relying on the coupling with GFF does not seem to be applied. We leave analysis of such cases in future work.

We have introduced our driving processes by setting the parameter of Dyson's Brownian motion model at $\beta=8/\kappa$.
In the SLE side, SLE$(\kappa)$ and SLE$(\pr{\kappa})$ with $\pr{\kappa}=16/\kappa$ are known to be {\it dual} to each other~\cite{Dubedat2005,Zhan2008,Dubedat2009b,Zhan2010}.
In terms of the $\beta$-parameter, the two parameters $\beta=8/\kappa$ and $\pr{\beta}=8/\pr{\kappa}$ are related by $\beta=4/\pr{\beta}$.
It is remarkable that, for general $\beta$-ensembles, the static version of Dyson models,
these two parameters are also regarded as dual \cite{Desrosiers2009,Forrester2010}.
It would be an interesting direction to consider if these two kinds of duality can be incorporated in the framework of the coupling
between multiple SLEs and GFFs.

\subsection*{Organization} The rest of the present paper is devoted to proofs of Proposition~\ref{prop:properties_flowlines}, Lemmas \ref{lem:single_param} and \ref{lem:driving_processes}. In Sect.~\ref{sect:prelim}, we give preliminaries on complex analysis and GFF. In Sect.~\ref{sect:flowline}, we give a brief overview of the  coupling of GFFs with Loewner chains followed by a proof of Proposition~\ref{prop:properties_flowlines}. In Sect.~\ref{sect:proofs_lemmas}, we prove Lemmas~\ref{lem:single_param} and \ref{lem:driving_processes} to complete the proof of Theorem~\ref{thm:main}.

\subsection*{Acknowledgments}
We are grateful to Kalle Kyt{\"o}l{\"a} for fruitful discussion and encouragement.
We would also like to thank the anonymous referees for helping us improve our manuscript.
MK was supported by the Grant-in-Aid for Scientific Research (C) (No.~19K03674), (B) (No.~18H01124), (S) (No.~16H06338) and (A) (No.~21H04432) of Japan Society for the Promotion of Science (JSPS).
SK was supported by the Grant-in-Aid for JSPS Fellows (No.~19J01279).

\section{Preliminaries}
\label{sect:prelim}
\subsection{Complex analysis}
A bounded subset $K\subset \bH$ is called a compact $\bH$-hull if $K=\ol{K}\cap \bH$ and $\bH\bsl K$ is simply connected.
For a compact $\bH$-hull $K$, there exists a unique conformal map $g_{K}:\bH\bsl K\to \bH$ under the hydrodynamical normalization: $\lim_{z\to\infty}|g_{K}(z)-z|=0$.
The half-plane capacity of $K$ is defined by $\hcap (K):=\lim_{z\to\infty}z(g_{K}(z)-z)$. Note that the half-plane capacity is positive and increasing in the sense that, if $K\subset \pr{K}$, then $\hcap (K)\le \hcap (\pr{K})$ \cite{Lawler2005}.

Let us recall two significant properties of the half-plane capacity: additivity and scaling properties \cite{Lawler2005}.
Let $A\subset\bH$ be a compact $\bH$-hull and $B\subset\bH\bsl A$ be a subset such that $A\cup B$ be a compact $\bH$-hull.
Then, $g_{A}(B)\subset\bH$ is a compact $\bH$-hull, and we have the additivity of the half-plane capacity:
\begin{equation}
\label{eq:additivity_hcap}
	\hcap (A\cup B)=\hcap (A)+\hcap (g_{A}(B)).
\end{equation}
For a positive number $\alpha>0$, the dilatation $\psi_{\alpha}:\bH\to\bH$ is defined by $\psi_{\alpha}(z)=\alpha z$, $z\in\bH$. 
For a compact $\bH$-hull $A\subset\bH$, its image $\psi_{\alpha}(A)$ under the dilation is again a compact $\bH$-hull.
The scaling property of the half-plane capacity is the property that
\begin{equation}
\label{eq:scaling_hcap}
	\hcap (\psi_{\alpha}(A))=\alpha^{2} \hcap (A).
\end{equation}

Recall that the solution $(g_{t}:t\geq 0)$ of a multiple Loewner equation (\ref{eq:multiple_Loewner}) is a family of conformal maps $g_{t}:\bH\bsl K_{t} \to\bH$, $t\geq 0$. It can be verified from continuity of the driving processes $\left(X^{(i)}_{t}:t\geq 0\right)$, $i=1,\dots, N$ that, at each $t\geq 0$, $K_{t}$ is almost surely a compact $\bH$-hull.
Furthermore, expanding both sides of (\ref{eq:multiple_Loewner}) around $z=\infty$ and comparing the coefficients of $z^{-1}$,
we find that the half-plane capacities $\hcap (K_{t})$, $t\geq 0$ satisfy the ordinary differential equation
\begin{equation*}
	\frac{d}{dt}\hcap (K_{t})=2N,\quad t\geq 0,\quad \hcap (K_{0})=0.
\end{equation*}
In other words, they are given by $\hcap (K_{t})=2Nt$, $t\geq 0$.

\subsection{GFF terminology}
Let $D\subseteq\bC$ be an open subset with harmonically non-trivial boundary, which implies that a Brownian motion starting at a point in $D$ almost surely hits the boundary $\del D$. We write $C^{\infty}_{0}(D)$ for the space of $C^{\infty}$ functions compactly supported in $D$, which we equip with the Dirichlet inner product
\begin{equation*}
	(f,g)_{\nabla}=\frac{1}{2\pi}\int_{D} (\nabla f) \cdot (\nabla g),\ \ f,g\in C^{\infty}_{0}(D).
\end{equation*}
The Hilbert space completion of $C^{\infty}_{0}(D)$ with respect to the inner product $(\cdot,\cdot)_{\nabla}$ is denoted by $W(D)$.
Let us fix a complete orthonormal system  $\set{\phi_{n}}_{n\in\bN}$ of $W(D)$ and take i.i.d. normal Gaussian random variables $\alpha_{n}$, $n\in\bN$ defined on a probability space $(\Omega^{\GFF},\cF^{\GFF},\bP^{\GFF})$. We write the expectation value under the probability measure $\bP^{\GFF}$ as $\bE^{\GFF}$.
The zero-boundary GFF on $D$ is defined by (see e.g.~\cite{MillerSheffield2016a})
\begin{equation*}
	H=\sum_{n\in\bN}\alpha_{n}\phi_{n},
\end{equation*}
which almost surely converges in a space of distributions with test functions in $C_{0}^{\infty}(D)$. Hence, for $f\in C_{0}^{\infty}(D)$, the pairing $(H,f)$ is well-defined, where $(\cdot,\cdot)$ may be regarded as the standard $L^{2}$ inner product. Then, motivated by the integration by parts, we may define
\begin{equation*}
	(H,f)_{\nabla}:=-\frac{1}{2\pi}(H,\Delta f),\ \ f\in C^{\infty}_{0}(D).
\end{equation*}
This extends to the whole $W(D)$ so that $(H,f)_{\nabla}$ is well-defined for $f\in W(D)$. We also have
\begin{equation*}
	\bE^{\GFF}[(H,f)_{\nabla}(H,g)_{\nabla}]=(f,g)_{\nabla},\ \ f,g\in W(D).
\end{equation*}
Hence, this defines an isometry $(H,\cdot)_{\nabla}:W(D)\to L^{2}(\Omega^{\GFF},\cF^{\GFF},\bP^{\GFF})$. We can assume that the $\sigma$-algebra $\cF^{\GFF}$ is generated by $(H,f)_{\nabla}$ $f\in W(D)$, and have the Wiener chaos decomposition \cite{Janson1997, Sheffield2007}:
\begin{equation}
\label{eq:Wiener_chaos_decomposition}
	L^{2}(\Omega^{\GFF},\cF^{\GFF},\bP^{\GFF})\simeq \bigoplus_{n=0}^{\infty}W(D)^{\odot n},
\end{equation}
where $W(D)^{\odot n}$ is the $n$-fold symmetric tensor product of $W(D)$.

Let $U\subsetneq D$ be an open subset. We may regard $W(U)$ as a closed subspace of $W(D)$ and can restrict the GFF $H$ on $D$ to $U$. The orthogonal complement $W^{\perp}(U)\subset W(D)$ consists of functions that are harmonic on $U$.
We define a $\sigma$-subalgebra $\cF^{\GFF}_{U}$ generated by $H|_{U}$, equivalently, generated by $(H,f)$, $f\in C^{\infty}_{0}(U)$. The conditional expectation $\bE^{\GFF}[\cdot|\cF^{\GFF}_{U}]:L^{2}(\Omega^{\GFF},\cF^{\GFF},\bP^{\GFF})\to L^{2}(\Omega^{\GFF},\cF^{\GFF}_{U},\bP^{\GFF})$ is realized in the Wiener chaos decomposition (\ref{eq:Wiener_chaos_decomposition}) by the projection induced by $W(D)\to W(U)$.
For a closed subset $K\subset D$, we write $\cF^{\GFF}_{K^{+}}$ for the $\sigma$-subalgebra generated by the projection of $H$ onto $W^{\perp}(D\bsl K)$.
Then the conditional expectation $\bE^{\GFF}[\cdot|\cF^{\GFF}_{K^{+}}]$ is associated with the projection $W(D)\to W^{\perp}(D\bsl K)$ in the Wiener chaos decomposition (\ref{eq:Wiener_chaos_decomposition}).

According to the orthogonal decomposition $W(D)=W(U)\oplus W^{\perp}(U)$, we write $H\overset{\law}{=}H_{U}+H_{U^{\mathrm{c}}}$, where $H_{U}$ and $H_{U^{\mathrm{c}}}$ are independent random distributions such that (1) $H_{U}$ is the zero-boundary GFF on $U$ and vanishes outside $U$, and (2) $H_{U^{\mathrm{c}}}$ is harmonic on $U$.
Hence, we have the Markov property stated as follows:
\begin{prop}[Markov property]
For a subdomain $U\subset D$, the conditional law of $H|_{U}$ given $\cF^{\GFF}_{(D\bsl U)^{+}}$ is that of the zero-boundary GFF $H_{U}$ plus the harmonic extension of $H|_{\del U}$ onto $U$.
\end{prop}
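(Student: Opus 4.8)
The plan is to deduce everything from the orthogonal decomposition $W(D)=W(U)\oplus W^{\perp}(U)$ recorded above, together with the independence it induces on the Gaussian side; no new analytic input is needed beyond what has already been set up.

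First I would choose a complete orthonormal system of $W(D)$ adapted to this splitting, namely the union of a complete orthonormal system of $W(U)$ with one of $W^{\perp}(U)$, and expand $H$ in it. Collecting the modes lying in $W(U)$ produces $H_{U}$ and the remaining modes produce $H_{U^{\mathrm{c}}}$, so that $H\overset{\law}{=}H_{U}+H_{U^{\mathrm{c}}}$. Since the two families of Gaussian coefficients are disjoint, $H_{U}$ and $H_{U^{\mathrm{c}}}$ are independent; moreover $H_{U}$ is, by construction, the zero-boundary GFF on $U$ (extended by zero outside $U$), while $H_{U^{\mathrm{c}}}$, being a limit of functions harmonic on $U$, is harmonic, hence smooth, on $U$.

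Next I would identify the conditioning $\sigma$-algebra. Since $D\bsl(D\bsl U)=U$, the space $W^{\perp}(D\bsl(D\bsl U))$ is exactly $W^{\perp}(U)$, so by the definition of $\cF^{\GFF}_{K^{+}}$ the $\sigma$-algebra $\cF^{\GFF}_{(D\bsl U)^{+}}$ is generated by the projection of $H$ onto $W^{\perp}(U)$, namely by $H_{U^{\mathrm{c}}}$. Because $H_{U}$ is independent of $H_{U^{\mathrm{c}}}$, the conditional law of $H_{U}$ given $\cF^{\GFF}_{(D\bsl U)^{+}}$ equals its unconditional law. Writing $H|_{U}=H_{U}|_{U}+H_{U^{\mathrm{c}}}|_{U}=H_{U}+\fh$ with $\fh:=H_{U^{\mathrm{c}}}|_{U}$ harmonic on $U$ and $\cF^{\GFF}_{(D\bsl U)^{+}}$-measurable, I conclude that, conditionally on $\cF^{\GFF}_{(D\bsl U)^{+}}$, the field $H|_{U}$ is distributed as a zero-boundary GFF on $U$ plus the harmonic function $\fh$, which is deterministic once the conditioning is fixed.

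The remaining point is to recognize $\fh$ as ``the harmonic extension of $H|_{\del U}$ onto $U$'': the orthogonal projection $W(D)\to W^{\perp}(U)$ sends a sufficiently regular $f$ to the function that agrees with $f$ on $\del U$ and is harmonic inside $U$, i.e.\ to the harmonic extension of $f|_{\del U}$, and passing to $H$ this is precisely the description of $\fh$. This identification is the only delicate step, since $H|_{\del U}$ has no literal meaning and one must either take $\fh$ as the definition of the harmonic extension or verify the agreement against a mollified version of $H$, in the manner of \cite{Sheffield2007, MillerSheffield2016a}; the probabilistic content---that independence yields both the unchanged conditional law of the interior field and the additive harmonic shift---is then immediate from the decomposition.
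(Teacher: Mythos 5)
Your proposal is correct and follows essentially the same route as the paper, which presents the proposition as an immediate consequence of the orthogonal decomposition $W(D)=W(U)\oplus W^{\perp}(U)$ and the resulting independent splitting $H\overset{\law}{=}H_{U}+H_{U^{\mathrm{c}}}$. You merely spell out the details the paper leaves implicit (identifying $\cF^{\GFF}_{(D\bsl U)^{+}}$ with the $\sigma$-algebra of $H_{U^{\mathrm{c}}}$ and interpreting the harmonic part as the harmonic extension of $H|_{\del U}$), and your caveat about the latter identification is appropriately flagged.
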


\section{Flow lines of Gaussian free field}
\label{sect:flowline}
\subsection{Configurations and boundary conditions}
Let $D\subsetneq \bC$ be a simply connected domain, $N\in\bN$, and $x_{1},\dots, x_{N}$ and $y$ be distinct boundary points aligned counterclockwise in this order. We call a set of these data $\cC=(D;x_{1},\dots, x_{N};y)$ a configuration following \cite{Dubedat2009}. An equivalence from a configuration $\cC=(D;x_{1},\dots, x_{N};y)$ to another $\pr{\cC}=(\pr{D};\pr{x}_{1},\dots, \pr{x}_{N};\pr{y})$ is an conformal map $\psi:D\to\pr{D}$ such that $\psi (x_{i})=\pr{x}_{i}$, $i=1,\dots, N$ and $\psi (y)=\pr{y}$. In this case, we informally write $\psi:\cC\to\pr{\cC}$ or $\pr{\cC}=\psi (\cC)$.

Given real numbers $a_{1},\dots, a_{N}$, and $\chi$, we call $(\ul{a},\chi)$ a set of boundary conditions and define, for each configuration $\cC=(D;\ul{x},y)$, a harmonic function $\fh_{\cC,(\ul{a},\chi)}$ on $D$ in the following procedure. For $\cC=(\bH;X_{1},\dots, X_{N};\infty)$, we set
\begin{equation*}
	\fh_{\cC,(\ul{a},\chi)}(z):=\sum_{i=1}^{N}a_{i}\arg (z-X_{i}),\quad z\in\bH.
\end{equation*}
For another configuration $\pr{\cC}=(D;x_{1},\dots, x_{N};y)$ equivalent to $\cC=(\bH;X_{1},\dots, X_{N};\infty)$, we take an equivalence $\psi:\pr{\cC}\to \cC$ and set $\fh_{\pr{\cC},(\ul{a},\chi)}:=\fh_{\cC,(\ul{a},\chi)}\circ \psi-\chi \arg \pr{\psi}$, where $\pr{\psi}$ is the derivative of $\psi$ in $z\in D$. When $N\geq 2$, such an equivalence $\psi$ is unique if exists. Even if it is not unique when $N=1$, the harmonic function $\fh_{\pr{\cC},(\ul{a},\chi)}$ is well-defined.
A pair $([\cC],(\ul{a},\chi))$ of an equivalence class of configurations and a set of boundary conditions is essentially the same object as what is called an imaginary surface in \cite{MillerSheffield2016a}.

\begin{defn}
\label{defn:GFF_bc}
A random distribution $h$ on $D$ with test functions in $C^{\infty}_{0}(D)$ is called a GFF if there exists a configuration $\cC=(D;x_{1},\dots, x_{N};y)$ and a set of boundary conditions $(\ul{a},\chi)$ such that $h\overset{\law}{=}H+\fh_{\cC;(\ul{a},\chi)}$, where $H$ is the zero-boundary GFF on $D$. In this case, we also say that $h$ is the GFF on $\cC$ under the boundary conditions $(\ul{a},\chi)$.
\end{defn}

Since the restriction of a harmonic function to a subdomain is harmonic, the Markov property of a GFF is obvious; for a GFF $h$ on $D$ and any subdomain $U\subset D$, the conditional law of $h|_{U}$ given $\cF^{\GFF}_{(D\bsl U)^{+}}$ is the same as that of the zero-boundary GFF on $U$ plus the harmonic extension of $h|_{\del U}$ onto $U$. It is not, however, ensured that $h|_{U}$ given $\cF^{\GFF}_{(D\bsl U)^{+}}$ is a GFF on $U$ in the sense of Definition \ref{defn:GFF_bc}, i.e., there exists a configuration $\cC=(U;\ul{x};y)$ and a set of boundary conditions $(\ul{a},\chi)$ so that the conditional law of $h|_{U}$ given $\cF^{\GFF}_{(D\bsl U)^{+}}$ agrees with that of $H_{U}+\fh_{\cC,(\ul{a},\chi)}$.

\subsection{Coupling: generality}
The usual notion of a Loewner chain in a domain refers to an increasing closed subsets evolving from the boundary.
Let us introduce its generalization, a {\it Loewner chain in a configuration}, which avoids the other boundary points than the one it starts from.
\begin{defn}
Fix an arbitrary $i\in\set{1,\dots, N}$.
A Loewner chain in a configuration $\cC=(D;x_{1},\dots, x_{N};y)$ starting at $x_{i}$ is a family of increasing subsets $(K^{(i)}_{t}\subset D:t\geq 0)$ such that:
\begin{enumerate}
\item 	it is generated by a continuous non-crossing curve $\gamma^{(i)}:[0,\infty)\to\ol{D}$ starting at $\gamma^{(i)} (0)=x_{i}$ so that, at each $t\geq 0$, $D\bsl K^{(i)}_{t}$ is the connected component of $D\bsl \gamma^{(i)} (0,t]$ whose closure contains $y$.
\item 	it does not hit $x_{j}$, $j\neq i$ and $y$, i.e., $x_{j}\not\in \ol{K^{(i)}_{t}}$, $j\neq i$ and $y\not\in \ol{K^{(i)}_{t}}$ for any $t\geq 0$.
\end{enumerate}
\end{defn}

Let $\cC=(D;x_{1},\dots, x_{N};y)$ be a configuration and fix arbitrarily $i\in \set{1,\dots, N}$.
We write $\cU^{(i)}$ for the collection of open neighborhoods of $x_{i}$ in $D$ equipped with the natural partial order with respect to the inclusion relation.
Let $(\cF^{(i)}_{U})_{U\in\cU^{(i)}}$ be a filtration of $\sigma$-algebras labelled by $\cU^{(i)}$.
An $\cF^{(i)}$-GFF is a GFF $h$ on $D$ such that, for any $U\in\cU^{(i)}$, $\cF^{\GFF}_{U}\subset\cF^{(i)}_{U}$, and $h|_{D\bsl \ol{U}}$ is conditionally independent of $\cF^{(i)}_{U}$ given $\cF^{\GFF}_{\del U}$ (Note that the conditional independence from $\cF^{\GFF}_{U}$ follows from the Markov property). We say that a Loewner chain $(K^{(i)}_{t}:t\ge 0)$ in $\cC$ starting at $x_{i}$ is $\cF^{(i)}$-adapted if, for any $U\in\cU^{(i)}$, its first exit time from $U$ is $\cF^{(i)}_{U}$-measurable. Suppose that the Loewner chain $(K^{(i)}_{t}:t\geq 0)$ is generated by a curve $\gamma^{(i)}$. At any $t\geq 0$, we set $D^{(i)}_{t}:=D\bsl K^{(i)}_{t}$ and $\cC^{(i)}_{t}:=(D^{(i)}_{t};x_{1},\dots, \gamma^{(i)}(t),\dots, x_{N};y)$.
Given a set of boundary conditions $(\ul{a},\chi)$, we consider the following problem referred to as the coupling problem: find a filtered probability space $\left(\Omega,(\cF^{(i)}_{U})_{U\in \cU^{(i)}},\bP\right)$ on which a GFF $h$ and a Loewner chain $(K^{(i)}_{t}:t\ge 0)$ in $\cC$ starting at $x_{i}$ are defined, such that
\begin{itemize}
\item 	the GFF $h$ is an $\cF^{(i)}$-free field,
\item 	the Loewner chain $(K^{(i)}_{t}:t\ge 0)$ is $\cF^{(i)}$-adapted,
\item 	we have
		\begin{equation*}
			\bE\left[h|_{D^{(i)}_{t}}\Big|\cF^{\GFF}_{K_{t}^{(i)+}}\right]=\fh_{\cC_{t}^{(i)},(\ul{a},\chi)}, \quad t\geq 0,
		\end{equation*}
		where $\bE$ denotes the expectation value under the probability measure $\bP$.
\end{itemize}
Recall that the conditional expectation value with respect to $\cF^{\GFF}_{K_{t}^{(i)+}}$ takes the harmonic function part of the restriction $h|_{D_{t}^{(i)}}$.
Hence, the last condition is equivalent to that the conditional law of $h|_{D^{(i)}_{t}}$ given $K^{(i)}_{t}$ agrees with that of the zero boundary GFF on $D^{(i)}_{t}$ plus the harmonic function $\fh_{\cC_{t}^{(i)},(\ul{a},\chi)}$.
Note that, at the initial time $t=0$, we must have $h\overset{\law}{=}H+\fh_{\cC;(\ul{a},\chi)}$, where $H$ is the zero-boundary GFF on $D$, i.e., $h$ is the GFF on $\cC$ under the boundary conditions $(\ul{a},\chi)$.

Significantly, as was noticed in \cite{Dubedat2009}, the above coupling problem is coordinate free in the following sense. 
\begin{prop}
\label{prop:coordinate_freeness}
Let $\pr{\cC}=(D;x_{1},\dots, x_{N};y)$ be a configuration equivalent to $\cC=(\bH;X_{1},\dots,X_{N};\infty)$ under an equivalence $\psi:\pr{\cC}\to \cC$,
and let $i\in\set{1,\dots, N}$ be fixed. 
Suppose that we have a Loewner chain $(K^{(i)}_{t}:t\geq 0)$ in $\cC$ starting at $X_{i}$ and that the pair $\left(h=H+\fh_{\cC,(\ul{a},\chi)},(K^{(i)}_{t}:t\geq 0)\right)$, where $H$ is the zero-boundary GFF on $\bH$, gives a solution to the coupling problem for $\cC$. 
Let $\pr{H}$ be the zero-boundary GFF on $D$ and set $\pr{h}=\pr{H}+\fh_{\pr{\cC},(\ul{a},\chi)}$. 
Then, the pair $\left(\pr{h},\left(\psi^{-1}(K^{(i)}_{t}):t\ge 0\right)\right)$ gives a coupling for $\pr{\cC}$ under the same set of boundary conditions $(\ul{a},\chi)$.
\end{prop}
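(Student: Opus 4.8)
\noindent\emph{Proof proposal.}
The plan is to transport the whole coupling for $\cC$ through the equivalence $\psi$, using only the conformal invariance of the zero-boundary GFF together with the conformal naturality of the $\sigma$-algebras $\cF^{\GFF}_{U}$, $\cF^{\GFF}_{K^{+}}$ and of the associated harmonic projection recalled in Sect.~\ref{sect:prelim}. I would work on the probability space carrying $H$ and the Loewner chain $(K^{(i)}_{t}:t\ge 0)$, and take $\pr{H}:=H\circ\psi$, the pullback of $H$ by $\psi$; by conformal invariance of the Dirichlet energy this is a zero-boundary GFF on $D$, hence an admissible realization of $\pr{H}$. The defining transformation rule of the boundary-condition function gives $\fh_{\pr{\cC},(\ul{a},\chi)}=\fh_{\cC,(\ul{a},\chi)}\circ\psi-\chi\arg\pr{\psi}$, so that $\pr{h}=\pr{H}+\fh_{\pr{\cC},(\ul{a},\chi)}=h\circ\psi-\chi\arg\pr{\psi}$. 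As filtration on $D$ I would use $U\mapsto\cF^{(i)}_{\psi(U)}$, with $U$ ranging over open neighborhoods of $x_{i}$ in $D$; note that $U\mapsto\psi(U)$ is an order isomorphism onto the open neighborhoods of $X_{i}$ in $\bH$.

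Next I would check that $(\psi^{-1}(K^{(i)}_{t}):t\ge 0)$ is a Loewner chain in $\pr{\cC}$ starting at $x_{i}$, generated by $\psi^{-1}\circ\gamma^{(i)}$, where $\gamma^{(i)}$ is the curve generating $(K^{(i)}_{t}:t\ge 0)$. This curve is continuous and non-crossing with value $x_{i}=\psi^{-1}(X_{i})$ at time $0$; since $\psi$ is a conformal bijection $D\to\bH$ carrying the prime ends $x_{1},\dots,x_{N},y$ to $X_{1},\dots,X_{N},\infty$, it maps the connected components of $D\bsl(\psi^{-1}\circ\gamma^{(i)})(0,t]$ bijectively onto those of $\bH\bsl\gamma^{(i)}(0,t]$, so that $D\bsl\psi^{-1}(K^{(i)}_{t})=\psi^{-1}(D^{(i)}_{t})$ is exactly the component whose closure contains $y$. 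For $j\ne i$ a small half-disk around $X_{j}$ disjoint from $K^{(i)}_{t}$ pulls back to a neighborhood of the prime end $x_{j}$ disjoint from $\psi^{-1}(K^{(i)}_{t})$, and likewise for $y$ using a neighborhood of $\infty$; hence $\psi^{-1}(K^{(i)}_{t})$ avoids $x_{j}$ and $y$. The same remark shows that $\psi$ restricts to an equivalence from the evolved configuration $\psi^{-1}(\cC^{(i)}_{t})=(\psi^{-1}(D^{(i)}_{t});x_{1},\dots,(\psi^{-1}\circ\gamma^{(i)})(t),\dots,x_{N};y)$ onto $\cC^{(i)}_{t}$, and composing this restriction with an equivalence of $\cC^{(i)}_{t}$ to a configuration in $\bH$ and using the chain rule for $\arg$ yields the cocycle identity $\fh_{\psi^{-1}(\cC^{(i)}_{t}),(\ul{a},\chi)}=\fh_{\cC^{(i)}_{t},(\ul{a},\chi)}\circ\psi-\chi\arg\pr{\psi}$ for every $t\ge 0$.

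Then I would push the three conditions of the coupling problem through $\psi$. Since $\pr{h}$ and $\pr{H}$ differ by a deterministic function and $\pr{H}=H\circ\psi$, the $\sigma$-algebra generated by $\pr{h}|_{U}$ coincides with that generated by $H|_{\psi(U)}$, and, for any closed $\pr{K}\subset D$, the $\sigma$-algebra $\cF^{\GFF}_{(\pr{K})^{+}}$ formed from $\pr{h}$ coincides with $\cF^{\GFF}_{(\psi(\pr{K}))^{+}}$ formed from $H$. Hence $\pr{h}$ is a free field for the filtration $(\cF^{(i)}_{\psi(U)})_{U}$: the inclusion $\cF^{\GFF}_{U}\subset\cF^{(i)}_{\psi(U)}$ and the conditional independence of $\pr{h}|_{D\bsl\ol{U}}$ from $\cF^{(i)}_{\psi(U)}$ given $\cF^{\GFF}_{\del U}$ are literal pullbacks of the corresponding statements for $h$. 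Adaptedness is immediate, as the first exit time of $\psi^{-1}(K^{(i)}_{\bullet})$ from $U$ equals that of $K^{(i)}_{\bullet}$ from $\psi(U)$. For the coupling identity, conditioning $\pr{h}|_{\psi^{-1}(D^{(i)}_{t})}=h|_{D^{(i)}_{t}}\circ\psi-\chi\arg\pr{\psi}$ on $\cF^{\GFF}_{(\psi^{-1}(K^{(i)}_{t}))^{+}}$, which by the above equals $\cF^{\GFF}_{K_{t}^{(i)+}}$, extracts its harmonic part; since harmonic functions pull back to harmonic functions and $\arg\pr{\psi}$ is harmonic, this conditional expectation equals $\fh_{\cC^{(i)}_{t},(\ul{a},\chi)}\circ\psi-\chi\arg\pr{\psi}$, which by the cocycle identity above is $\fh_{\psi^{-1}(\cC^{(i)}_{t}),(\ul{a},\chi)}$; and the initial identity $\pr{h}\overset{\law}{=}\pr{H}+\fh_{\pr{\cC},(\ul{a},\chi)}$ holds at $t=0$ by construction.

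The step I expect to be the main obstacle is justifying precisely that the pullback $f\mapsto f\circ\psi$, a unitary from $W(\bH)$ onto $W(D)$, intertwines the orthogonal projections onto $W(U)$ and onto $W^{\perp}(D\bsl K)$ that define $\cF^{\GFF}_{U}$ and $\cF^{\GFF}_{K^{+}}$ in the Wiener chaos decomposition of Sect.~\ref{sect:prelim}, so that in particular the harmonic projection---equivalently the conditional expectation onto $\cF^{\GFF}_{K^{+}}$---commutes with composition by $\psi$; one must also make rigorous the prime-end assertions used above, namely that neighborhoods of the marked boundary points and the component whose closure contains $y$ transform as claimed under $\psi$ and $\psi^{-1}$. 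These are precisely the points isolated in \cite{Dubedat2009}, and once they are in hand the rest of the argument is a direct translation.
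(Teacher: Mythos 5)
Your proposal is correct and follows essentially the same route as the paper: pull everything back through $\psi$, use the cocycle identity $\fh_{\psi^{-1}(\cC^{(i)}_{t}),(\ul{a},\chi)}=\fh_{\cC^{(i)}_{t},(\ul{a},\chi)}\circ\psi-\chi\arg\pr{\psi}$ obtained from the equivalence $g^{(i)}_{t}\circ\psi$, and invoke conformal invariance of the zero-boundary GFF so that the harmonic projection commutes with composition by $\psi$. The only difference is that you spell out the verification of the Loewner-chain, free-field and adaptedness conditions, which the paper leaves implicit.
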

\begin{proof}
Using the family of conformal maps $(g^{(i)}_{t}:t\geq 0)$ associated with $(K^{(i)}_{t}:t\geq 0)$, we have
\begin{equation*}
	\bE\left[h|_{\bH^{(i)}_{t}}\Big|\cF^{\GFF}_{K_{t}^{(i)+}}\right]=\sum_{j=1}^{N}a_{j}\arg\left(g^{(i)}_{t}(\cdot)-X^{(j)}_{t}\right)-\chi \arg \pr{(g^{(i)}_{t})}(\cdot), \quad t\geq 0,
\end{equation*}
where $X^{(j)}_{t}=g^{(i)}_{t}(X_{j})$, $j=1,\dots, N$.
Note that the composition $g^{(i)}_{t}\circ\psi$ is an equivalence from $\psi^{-1}(\cC^{(i)}_{t})$ to $\cC$, implying that
\begin{align*}
	\fh_{\psi^{-1}(\cC^{(i)}_{t}),(\ul{a},\chi)}&=\left(\sum_{j=1}^{N}a_{j}\arg\left(g^{(i)}_{t}(\cdot)-X^{(j)}_{t}\right)-\chi \arg \pr{(g^{(i)}_{t})}(\cdot)\right)\circ\psi -\chi \arg \pr{\psi}(\cdot) \\
	&=\fh_{\cC^{(i)}_{t},(\ul{a},\chi)}\circ \psi-\chi \arg \pr{\psi}(\cdot), \quad t\geq 0.
\end{align*}
In particular, we have $\fh_{\pr{\cC},(\ul{a},\chi)}=\fh_{\cC,(\ul{a},\chi)}\circ\psi-\chi\arg\pr{\psi}$.
Noticing the conformal invariance of the zero-boundary GFF, we have
\begin{align*}
	\pr{h}|_{\psi^{-1}(\bH^{(i)}_{t})}
	&\overset{\law}{=}\left(H+\fh_{\cC,(\ul{a},\chi)}\right)\Big|_{\bH^{(i)}_{t}}\circ\psi -\chi \arg \pr{\psi} \\
	&\overset{\law}{=}\left(H\circ g^{(i)}_{t}+\fh_{\cC^{(i)}_{t},(\ul{a},\chi)}\right)\circ \psi -\chi \arg \pr{\psi}
\end{align*}
conditioned over $K^{(i)}_{t}$. Since $(H\circ g^{(i)}_{t})\circ \psi$ is the zero-boundary GFF on $\psi^{-1}(\bH^{(i)}_{t})$, we observe that
\begin{align*}
	\bE\left[\pr{h}|_{\psi^{-1}(\bH_{t})}\Big|\cF^{\GFF}_{\psi^{-1}(K^{(i)}_{t})^{+}}\right]=\fh_{\psi^{-1}(\cC^{(i)}_{t}),(\ul{a},\chi)}, \quad t\geq 0.
\end{align*}
Therefore, the desired property holds.
\end{proof}
Due to Proposition \ref{prop:coordinate_freeness}, we can specialize our attention to the case of $\cC=(\bH;X_{1},\dots, X_{N};\infty)$ without any loss of generality as long as a single Loewner chain is concerned.

\subsection{Flow lines}
In the rest of the paper, we consider a specific set of boundary conditions $a_{i}=-\frac{2}{\sqrt{\kappa}}$, $i=1,\dots, N$ and $\chi=\frac{2}{\sqrt{\kappa}}-\frac{\sqrt{\kappa}}{2}$. Observe that the GFF $h$ in (\ref{eq:relevant_GFF}) is the one on $\bH$ under these boundary conditions.
We see that there is a Loewner chain in $\cC=(\bH;X_{1},\dots, X_{N};\infty)$ starting at each $X_{i}$, $i=1,\dots, N$ coupled with this GFF $h$.

Let us arbitrarily fix $i\in\set{1,\dots, N}$, and consider a curve $\gamma^{(i)}:[0,\infty)\to\ol{\bH}$ such that $\gamma^{(i)}(0)=X_{i}$ that is generated by the SLE$(\kappa,\ul{\rho})$ with $\ul{\rho}=(2,\dots, 2)$ as follows \cite{Dubedat2007}. Suppose that $(g^{(i)}_{t}:t\ge 0)$ satisfies the Loewner equation
\begin{equation*}
	\frac{d}{dt}g^{(i)}_{t}(z)=\frac{2}{g^{(i)}_{t}(z)-\xi^{(i)}_{t}},\quad t\ge 0, \quad g^{(i)}_{0}(z)=z\in \bH,
\end{equation*}
where
\begin{align*}
	\xi^{(i)}_{t}&=X_{i}+\sqrt{\kappa}B^{(i)}_{t}+\sum_{j;j\neq i}\int_{0}^{t}\frac{2ds}{\xi^{(i)}_{s}-\zeta^{(i,j)}_{s}},\\
	\zeta^{(i,j)}_{t}&=X_{j}+\int_{0}^{t}\frac{2ds}{\zeta^{(i,j)}_{s}-\xi^{(i)}_{s}}, \quad j\neq i, \quad t\geq 0.
\end{align*}
Here, $(B^{(i)}_{t}:t\ge 0)$ is a standard Brownian motion.
We write $(K^{(i)}_{t}:t\ge 0)$ for the increasing family of compact $\bH$-hulls generated by $(g^{(i)}_{t}:t\ge 0)$, i.e., $\bH\bsl K_{t}^{(i)}$ is the unbounded component of $\bH\bsl \gamma^{(i)}(0,t]$ at each $t\geq 0$.
We also suppose that these processes are adapted to a filtration $(\cF^{(i)}_{t})_{t\ge 0}$.
Then, the family $(K^{(i)}_{t}:t\geq 0)$ is a Loewner chain in $\cC=(\bH;X_{1},\dots, X_{N};\infty)$ starting at $X_{i}$. Indeed, the following property follows from a special case of the arguments in \cite[Appendix A]{KytolaPeltola2016} and the fact that the Bessel process of dimension $d$ almost surely does not hit $0$ if $d\geq 2$ (see e.g. \cite[Theorem 1.1]{Katori2015}):
\begin{prop}[{\cite[Proposition~A.5]{KytolaPeltola2016}}]
\label{prop:absolute_continuity_wrt_Bessel}
Let $i\in\set{1,\dots, N}$ be fixed. Then, for each $j\neq i$, the process $(\xi^{(i)}_{t}-\zeta^{(i,j)}_{t}:t\geq 0)$ is absolutely continuous with respect to a time change of the Bessel process of dimension $8/\kappa+1\geq 2$. In particular, it almost surely does not hit $0$.
\end{prop}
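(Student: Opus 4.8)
The statement to prove is Proposition~A.5 of \cite{KytolaPeltola2016}, here restated as Proposition~\ref{prop:absolute_continuity_wrt_Bessel}: fixing $i$, for each $j\neq i$ the gap process $R^{(i,j)}_{t}:=\xi^{(i)}_{t}-\zeta^{(i,j)}_{t}$ is absolutely continuous with respect to a time change of a Bessel process of dimension $d=8/\kappa+1$, and hence almost surely does not hit $0$ since $d\geq 2$.

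\textbf{Plan of proof.} The plan is to write the SDE for the gap process, perform the natural time change that removes the overall time-dependent diffusion coefficient, and then recognize the resulting process as a Girsanov perturbation of a Bessel process. First I would differentiate: from the Loewner equation for $\zeta^{(i,j)}_{t}$ and the SDE for $\xi^{(i)}_{t}$ one gets, for a single fixed pair, the main drift terms
\begin{equation*}
	dR^{(i,j)}_{t}=\sqrt{\kappa}\,dB^{(i)}_{t}+\frac{2}{R^{(i,j)}_{t}}\,dt-\frac{2}{R^{(i,j)}_{t}}\,(-1)\,dt+\left(\text{terms from }k\neq i,j\right)dt,
\end{equation*}
so that, collecting the $\xi$-$\zeta$ interaction, the singular drift at coincidence is $\dfrac{4}{R^{(i,j)}_{t}}\,dt$ (the contribution $\frac{2}{\xi-\zeta}$ from the $\xi$-drift and $-\frac{2}{\zeta-\xi}=\frac{2}{\xi-\zeta}$ from the $\zeta$-evolution add), while all other summands (those involving $\zeta^{(i,k)}$ or $\xi^{(i)}-\zeta^{(i,k)}$ for $k\neq i,j$) stay bounded as long as $R^{(i,j)}$ is small and the other gaps are bounded away from $0$. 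Up to a time change $t\mapsto u$ with $du=\kappa\,dt$ — under which $\sqrt{\kappa}\,dB^{(i)}_{t}$ becomes a standard Brownian motion $d\wtilde{B}_{u}$ — the equation reads
\begin{equation*}
	dR_{u}=d\wtilde{B}_{u}+\frac{4/\kappa}{R_{u}}\,du+b_{u}\,du,
\end{equation*}
where $b_{u}$ is a locally bounded drift. Since a Bessel process of dimension $d$ satisfies $d\rho_{u}=d\wtilde{B}_{u}+\frac{d-1}{2}\frac{du}{\rho_{u}}$ and $\frac{d-1}{2}=\frac{4}{\kappa}$ precisely when $d=\frac{8}{\kappa}+1$, the process $R$ is, up to the bounded drift $b_{u}$, a Bessel process of that dimension.

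\textbf{The key steps, in order.} (1) Localize: fix a small $\delta>0$ and work on the stopping time $\sigma$ at which either $R^{(i,j)}$ reaches $\delta$ or some other gap $\xi^{(i)}-\zeta^{(i,k)}$ ($k\neq i,j$) gets within $\delta$ of $0$ or the configuration leaves a large compact set; on $[0,\sigma]$ the auxiliary drift $b_{u}$ is bounded, say $|b_{u}|\leq C_{\delta}$. (2) Apply Girsanov's theorem on $[0,\sigma]$: the Radon--Nikodym derivative is $\exp\bigl(\int_{0}^{\sigma} b_{u}\,d\wtilde{B}_{u}-\frac12\int_{0}^{\sigma} b_{u}^{2}\,du\bigr)$, whose Novikov condition is immediate from the uniform bound; under the new measure $R$ is exactly a Bessel$(d)$ process stopped at $\sigma$. (3) Invoke that Bessel$(d)$ with $d\geq 2$ a.s. does not hit $0$ (\cite[Theorem 1.1]{Katori2015}), so under the new measure, and hence under the original measure by equivalence, $R^{(i,j)}$ does not hit $0$ before $\sigma$. (4) Remove the localization: let $\delta\downarrow 0$ and note that the only way $R^{(i,j)}$ could hit $0$ is to do so at a time when it is below every threshold, which is covered; one must also argue the auxiliary gaps $\xi^{(i)}-\zeta^{(i,k)}$ do not themselves collide — but this is handled by the same argument applied symmetrically to each pair, or inductively, exactly as in \cite[Appendix A]{KytolaPeltola2016}. (5) Undo the time change to phrase the conclusion for the original parametrization.

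\textbf{Main obstacle.} The genuinely delicate point is step~(4): disentangling the simultaneous control of all the gap processes $\xi^{(i)}-\zeta^{(i,k)}$, $k\neq i$, since the ``bounded'' drift $b_{u}$ acting on $R^{(i,j)}$ is only bounded \emph{provided} the other gaps stay away from $0$, while the non-collision of those other gaps is itself what one is trying to prove. The resolution is the standard one for SLE$(\kappa,\ul{\rho})$ with all $\rho_{j}\ge 0$: run the localization until the \emph{first} time any gap gets small, and observe that near that first collision time only one gap is small (the others being bounded below by construction up to that time), so the two-particle analysis above applies to whichever gap is the culprit; iterating over successive such times and using that each carries a strictly positive (in fact full) probability of avoidance gives the claim. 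Everything else — the SDE computation, the Novikov estimate, the time change — is routine, and I would merely cite \cite{KytolaPeltola2016} for the bookkeeping rather than reproduce it.
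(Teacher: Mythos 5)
Your proposal is correct and follows essentially the same route as the paper, which does not reprove this statement but simply invokes \cite[Appendix~A]{KytolaPeltola2016} together with the non-hitting of $0$ by Bessel processes of dimension $d\ge 2$: the computation giving the singular drift $4/R$, the time change $u=\kappa t$ identifying the dimension $8/\kappa+1$, and the localized Girsanov comparison are exactly the cited argument. The only delicate point, the simultaneous control of the other gaps in your step~(4), is handled correctly in outline (by ordering of the force points only the two adjacent gaps matter, and the excursion-by-excursion iteration works since absolute continuity preserves null sets), and it is precisely the bookkeeping carried out in the reference.
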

Therefore, the conditions $X_{j}\not\in \ol{K^{(i)}_{t}}$, $j\neq i$, $t\geq 0$ are satisfied.
To describe the coupling, we write $\fh^{(i)}_{t}=\fh_{\cC^{(i)}_{t},(\ul{a},\chi)}$, or explicitly,
\begin{equation*}
	\fh^{(i)}_{t}(\cdot):=-\frac{2}{\sqrt{\kappa}}\arg\left(g^{(i)}_{t}(\cdot)-\xi^{(i)}_{t}\right)-\frac{2}{\sqrt{\kappa}}\sum_{j;j\neq i}\arg\left(g^{(i)}_{t}(\cdot)-\zeta^{(i,j)}_{t}\right)-\chi \arg (g^{(i)}_{t})^{\prime}(\cdot), \quad t\geq 0,
\end{equation*}
where $\chi=\frac{2}{\sqrt{\kappa}}-\frac{\sqrt{\kappa}}{2}$. Then $\fh^{(i)}_{t}$ is a harmonic function on $\bH\bsl K^{(i)}_{t}$ at each $t\geq 0$. We also set $h^{(i)}_{t}:=H\circ g^{(i)}_{t}+\fh^{(i)}_{t}$, $t\geq 0$, where $H$ is the zero-boundary GFF on $\bH$, and write $h=h^{(i)}_{0}$.
The coupling phenomenon is stated as follows~\cite{Dubedat2009, MillerSheffield2016a}:
\begin{prop}
Let $h$ be the GFF in (\ref{eq:relevant_GFF}) and $i\in\set{1,\dots, N}$ be arbitrarily fixed.
For any $t\geq 0$, the conditional law of $h|_{\bH\bsl K^{(i)}_{t}}$ given $\cF^{(i)}_{t}$ is the same as that of $h^{(i)}_{t}$. Moreover, the pair $\left(h,(K^{(i)}_{t}:t\geq 0) \right)$ is a solution to the coupling problem.
\end{prop}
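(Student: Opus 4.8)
This statement is the coupling between a single SLE$(\kappa,\ul{\rho})$--type Loewner chain and the Gaussian free field: here $(K^{(i)}_{t}:t\ge 0)$ is exactly the SLE$(\kappa,\ul{\rho})$ with $\ul{\rho}=(2,\dots,2)$ and force points $X_{j}$, $j\neq i$, while $a_{j}=-\tfrac{2}{\sqrt{\kappa}}$ for all $j$ and $\chi=\tfrac{2}{\sqrt{\kappa}}-\tfrac{\sqrt{\kappa}}{2}$ are precisely the boundary data compatible with it. The plan is to reproduce the Schramm--Sheffield/Dub\'edat argument in the form of \cite{Dubedat2009, MillerSheffield2016a} directly in the configuration $(\bH;X_{1},\dots,X_{N};\infty)$; as the computation below will show, the several marked points cause no additional difficulty.

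\textbf{Reduction to a one-dimensional martingale problem.} First I would observe that, conditionally on $\cF^{(i)}_{t}$, the candidate $h^{(i)}_{t}=H\circ g^{(i)}_{t}+\fh^{(i)}_{t}$ is --- by conformal invariance and the Markov property of the zero-boundary GFF --- a Gaussian field on $\bH\bsl K^{(i)}_{t}$ with mean $\fh^{(i)}_{t}$ and the covariance of the zero-boundary GFF on $\bH\bsl K^{(i)}_{t}$. Since a Gaussian field is determined by the one-dimensional laws of its pairings with smooth compactly supported test functions, it suffices to show that for each such $f$ (localizing so that the pairing is well defined) the real process $M_{t}(f):=\big(\fh^{(i)}_{t},f\big)$ is a continuous local martingale for $(\cF^{(i)}_{t})_{t\ge0}$ with
\[
	d\langle M(f)\rangle_{t}=-\,dV_{t}(f),
\]
where $V_{t}(f)$ is the variance of the pairing of $f$ with the zero-boundary GFF on $\bH\bsl K^{(i)}_{t}$. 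Granting this, the exponential $\exp\!\big(\sqrt{-1}\,\lambda M_{t}(f)-\tfrac{\lambda^{2}}{2}V_{t}(f)\big)$ is a bounded continuous local martingale, hence a true martingale, from which $h^{(i)}_{t}\overset{\law}{=}h$ follows for every fixed $t$, and more generally the conditional law of $h|_{\bH\bsl K^{(i)}_{t}}$ given $\cF^{(i)}_{s}$ is identified for all $s\le t$. The coupling itself is then produced by prescribing, consistently in $t$ by the Markov properties of the GFF and of SLE$(\kappa,\ul{\rho})$ (Kolmogorov extension), the conditional law of $h$ given the chain up to time $t$ to be that of $h^{(i)}_{t}$; the martingale identity forces the resulting marginal law of $h$ to be the GFF (\ref{eq:relevant_GFF}), and the remaining requirements --- that $h$ be an $\cF^{(i)}$-GFF and $(K^{(i)}_{t})$ be $\cF^{(i)}$-adapted --- hold by construction.

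\textbf{The It\^o computation.} The heart of the argument is to verify the two properties of $M_{t}(f)$, which I would do by applying It\^o's formula to
\[
	\fh^{(i)}_{t}(z)=\rmIm\Big[-\tfrac{2}{\sqrt{\kappa}}\log\!\big(g^{(i)}_{t}(z)-\xi^{(i)}_{t}\big)-\tfrac{2}{\sqrt{\kappa}}\sum_{j\neq i}\log\!\big(g^{(i)}_{t}(z)-\zeta^{(i,j)}_{t}\big)-\chi\log (g^{(i)}_{t})^{\prime}(z)\Big]
\]
for fixed $z$, using the Loewner equation for $g^{(i)}_{t}$ together with the SDEs obeyed by $\xi^{(i)}_{t}$ and $\zeta^{(i,j)}_{t}$. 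One checks that the drift produced by each force-point term $\zeta^{(i,j)}_{t}$ cancels against the drift fed into $\xi^{(i)}_{t}$ by the $\rho_{j}=2$ feedback (this happens for any common value $a_{j}=a$ of the marked-point weights), while the surviving drift is a multiple of $(g^{(i)}_{t}(z)-\xi^{(i)}_{t})^{-2}$ whose coefficient is proportional to $-\tfrac{2}{\sqrt{\kappa}}\big(2-\tfrac{\kappa}{2}\big)+2\chi$ and therefore vanishes exactly when $\chi=\tfrac{2}{\sqrt{\kappa}}-\tfrac{\sqrt{\kappa}}{2}$ and $a_{j}=-\tfrac{2}{\sqrt{\kappa}}$. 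What remains is $d\fh^{(i)}_{t}(z)=2\,\rmIm\!\big((g^{(i)}_{t}(z)-\xi^{(i)}_{t})^{-1}\big)\,dB^{(i)}_{t}$, so that $M_{t}(f)$ is a local martingale with $d\langle M(f)\rangle_{t}=\big(2\,\rmIm((g^{(i)}_{t}(\cdot)-\xi^{(i)}_{t})^{-1}),f\big)^{2}dt$; comparing this with Hadamard's variation formula for the Green's function of $\bH\bsl K^{(i)}_{t}$ along the Loewner flow (cf.\ \cite{Lawler2005}) then gives $d\langle M(f)\rangle_{t}=-\,dV_{t}(f)$. The normalization of the Dirichlet inner product fixed in Section~\ref{sect:prelim} is exactly what makes the two sides agree.

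\textbf{Expected obstacle.} The drift cancellation above is short; the real work is analytic. I expect the delicate points to be: justifying that pairings with smooth compactly supported test functions determine the conditional law of $h$ on the random --- and a priori rough --- domain $\bH\bsl K^{(i)}_{t}$; passing from the local martingale to a true martingale and upgrading the equality in law to one valid for all $t$ simultaneously; and verifying the consistency needed for the Kolmogorov construction of the coupling. Each of these is handled as in \cite{Dubedat2009, MillerSheffield2016a}, which I would follow for the full details.
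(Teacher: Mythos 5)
Your proposal is correct and follows the same Schramm--Sheffield/Dub\'edat route that the paper itself relies on: the proposition is stated without proof by citation to \cite{Dubedat2009, MillerSheffield2016a}, and your It\^o computation (cancellation of the force-point drifts, vanishing of the $(g^{(i)}_{t}(z)-\xi^{(i)}_{t})^{-2}$ coefficient $-\tfrac{2}{\sqrt{\kappa}}(2-\tfrac{\kappa}{2})+2\chi$ at $\chi=\tfrac{2}{\sqrt{\kappa}}-\tfrac{\sqrt{\kappa}}{2}$, and Hadamard's variation of the Green's function for the quadratic variation) is exactly the computation the paper reproduces in the final proposition of Section~4. The only slip is the parenthetical claim that the force-point cancellation ``happens for any common value $a_{j}=a$'': it requires $a_{j}=-\rho_{j}/\sqrt{\kappa}$, so with $\rho_{j}=2$ it holds only for $a_{j}=-\tfrac{2}{\sqrt{\kappa}}$, which is what you actually use, so the argument is unaffected.
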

Furthermore, we have the following~\cite{Dubedat2009, MillerSheffield2016a}:
\begin{thm}
\label{thm:strong_coupling}
For each fixed $i\in\set{1,\dots, N}$, the random curve $\gamma^{(i)}$ is a deterministic functional of the GFF $h$ in (\ref{eq:relevant_GFF}) so that $\cF^{(i)}_{t}=\cF^{\GFF}_{K^{(i)+}_{t}}$ at each $t\geq 0$.
\end{thm}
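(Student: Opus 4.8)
The plan is to promote the weak coupling recorded just above---that, conditionally on $\cF^{(i)}_{t}$, the law of $h|_{\bH\bsl K^{(i)}_{t}}$ is that of $h^{(i)}_{t}$---to the assertion that the whole chain $(K^{(i)}_{t}:t\ge 0)$, hence the curve $\gamma^{(i)}$, is a measurable functional of $h$, and then to identify $\cF^{(i)}_{t}$ with $\cF^{\GFF}_{K^{(i)+}_{t}}$. Fix $i$; by the coordinate-freeness of Proposition~\ref{prop:coordinate_freeness} there is no loss in working with $\cC=(\bH;X_{1},\dots,X_{N};\infty)$. A standard resampling argument produces, on one probability space, the GFF $h$ of~(\ref{eq:relevant_GFF}) together with two Loewner chains $(K^{(i)}_{t})$ and $(\what{K}^{(i)}_{t})$, each coupled with $h$ exactly as above and conditionally independent of one another given $h$. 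Since two copies of a random variable that are conditionally independent and identically distributed given a $\sigma$-algebra and that are almost surely equal must be measurable with respect to that $\sigma$-algebra, it suffices to prove $K^{(i)}_{t}=\what{K}^{(i)}_{t}$ for all $t$ almost surely.

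The coupling proposition is exactly the statement that each $K^{(i)}_{t}$ is a \emph{local set} for $h$, with associated harmonic function the boundary-data field $\fh^{(i)}_{t}$. I would then invoke the union property of local sets: if $A$ and $\what{A}$ are local sets coupled with the same field and conditionally independent given it, then $A\cup\what{A}$ is again local, the conditional mean $\bE[h\mid A,\what{A}]$ is harmonic off $A\cup\what{A}$, and near the part of $\partial A$ at positive distance from $\what{A}$ it agrees with the boundary data $\fh_{A}$ of $A$ alone (and symmetrically). Applying this with $A=K^{(i)}_{\sigma}$ and $\what{A}=\what{K}^{(i)}_{\what{\sigma}}$ for suitable stopping times yields a workable description of the conditional mean of $h$ given both chains.

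The decisive step, and the one I expect to be the main obstacle, is the rigidity that forces the two chains to coincide. The field $\fh^{(i)}_{t}$ exhibits, at the tip $\gamma^{(i)}(t)$, the characteristic flow-line behaviour: reading off the $-\tfrac{2}{\sqrt{\kappa}}\arg$ singularities together with the $-\chi\arg (g^{(i)}_{t})'$ winding term, the conditional mean along the two sides of $\gamma^{(i)}$ near its tip takes two prescribed values whose difference is the universal height gap, and this local profile is incompatible with any function harmonic across $\gamma^{(i)}(0,t]$. Hence, if $\what{K}^{(i)}$ ever reached an open portion of $\gamma^{(i)}(0,t]$ before $\gamma^{(i)}$ itself had traced it (or vice versa), then on the complement of the union the conditional mean would have to be simultaneously harmonic across $\gamma^{(i)}$---as seen from $\what{K}^{(i)}$---and to display the flow-line jump---as seen from $K^{(i)}$. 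Harmonic-measure estimates at the tip, carried out as in \cite{Dubedat2009, MillerSheffield2016a}, exclude this; comparing the two chains at a countable dense set of capacity times and using continuity of the curves then yields $K^{(i)}_{t}=\what{K}^{(i)}_{t}$ for every $t$, so that $\gamma^{(i)}$, and with it all the driving data, is a deterministic functional of $h$. For $\kappa\in[4,8]$ one substitutes throughout the corresponding statements for counterflow lines and, at $\kappa=8$, for space-filling SLE.

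It remains to identify the filtrations. Because the comparison above is local---only the restriction of $h$ to an arbitrarily small neighbourhood of $K^{(i)}_{t}$ is used to recover the nested family $(K^{(i)}_{s})_{s\le t}$ and thereby the processes $\xi^{(i)}$, $\zeta^{(i,j)}$ on $[0,t]$---the flow-line construction of \cite{MillerSheffield2016a} gives that these are $\cF^{\GFF}_{K^{(i)+}_{t}}$-measurable, whence $\cF^{(i)}_{t}\subseteq \cF^{\GFF}_{K^{(i)+}_{t}}$. For the reverse inclusion, note that in $h|_{\bH\bsl K^{(i)}_{t}}=H\circ g^{(i)}_{t}+\fh^{(i)}_{t}$ the term $H\circ g^{(i)}_{t}$ has no harmonic part on $\bH\bsl K^{(i)}_{t}$ while $\fh^{(i)}_{t}$ is harmonic there; thus the harmonic projection of $h$ generating $\cF^{\GFF}_{K^{(i)+}_{t}}$ is $\fh^{(i)}_{t}$, which together with $K^{(i)}_{t}$ is $\cF^{(i)}_{t}$-measurable, so $\cF^{\GFF}_{K^{(i)+}_{t}}\subseteq \cF^{(i)}_{t}$. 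The two $\sigma$-algebras therefore agree.
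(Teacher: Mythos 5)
The paper gives no proof of Theorem \ref{thm:strong_coupling}: the statement is imported verbatim from \cite{Dubedat2009} and \cite{MillerSheffield2016a}, so there is no in-paper argument to compare against. Your sketch is a faithful outline of the argument in those references --- the reduction via two conditionally independent resamplings of the chain given $h$, the union property of local sets, and the height-gap rigidity at the tip that forces the two chains to coincide --- and your identification of $\cF^{(i)}_{t}$ with $\cF^{\GFF}_{K^{(i)+}_{t}}$ is consistent with how $\cF^{\GFF}_{K^{+}}$ is defined in Section \ref{sect:prelim}. The one caveat is that the decisive step (excluding, via harmonic-measure estimates at the tip, that one chain absorbs part of the other's trace prematurely) is itself deferred to \cite{Dubedat2009, MillerSheffield2016a}; since that is exactly where the paper locates the entire proof, your outline is at least as complete as the paper's own treatment.
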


When $\kappa\in (0,4)$, for each $i=1,\dots, N$, the curve $\gamma^{(i)}$ is referred to as the flow line of $h$ starting at $X_{i}$ in \cite{MillerSheffield2016a}. It was shown therein that the curve corresponding to the parameter $\pr{\kappa}=16/\kappa>4$ can be realized in the same domain and called the counter flow line. When $\kappa=4$, each curve $\gamma^{(i)}$, $i=1,\dots, N$ is named a contour line in \cite{SchrammSheffield2013} or a level line in \cite{PeltolaWu2019}. For simplicity, in this paper, we call the curve a flow line regardless of the parameter $\kappa$. Though in \cite{MillerSheffield2016a}, the authors also analyzed the coupling for all $\kappa>0$, we concentrate on the case of $\kappa\in (0,8]$ since, otherwise, the Loewner chain $(K^{(i)}_{t}:t\geq 0)$ cannot be in $\cC=(\bH;X_{1},\dots, X_{N};\infty)$.

Let us state properties of $\gamma^{(i)}$, $i=1,\dots, N$ that are deduced from the above arguments.
\begin{prop}
\label{prop:property_H}
For any $\kappa\in (0,8]$ and $i=1,\dots, N$, $\lim_{t\to\infty}|\gamma^{(i)}(t)|=\infty$. Furthermore,
\begin{enumerate}
\item 	when $\kappa\in (0,4]$, $\gamma^{(i)}$ is almost surely a simple curve such that $\gamma^{(i)}(0,\infty)\subset\bH$,
\item 	when $\kappa\in (4,8)$, $\gamma^{(i)}$ is almost surely self-intersecting and hits the interval $(X_{i-1},X_{i+1})$ (under the agreement that $X_{0}=-\infty$ and $X_{N+1}=\infty$).
\item 	when $\kappa=8$, $\gamma^{(i)}$ is almost surely space-filling away from $(-\infty,X_{i-1}]\cup [X_{i+1},\infty)$ (under the agreement that $(-\infty,X_{0}]=[X_{N+1},\infty)=\emptyset$).
\end{enumerate}
\end{prop}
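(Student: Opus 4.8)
The curve $\gamma^{(i)}$ is, by construction, the $\mathrm{SLE}(\kappa,\ul{\rho})$ with weight vector $\ul{\rho}=(2,\dots,2)$ and force points $X_{j}$, $j\neq i$, that is, the flow line of $h$ started at $X_{i}$. The plan is to read off the three cases of the proposition from the known behavior of such curves, the only genuinely new point being the precise identification of the subset of $\bR$ that $\gamma^{(i)}$ meets for $\kappa>4$, respectively avoids for $\kappa=8$. The two structural inputs I would start from are: (a) Proposition~\ref{prop:absolute_continuity_wrt_Bessel}, which says that each difference $\xi^{(i)}_{t}-\zeta^{(i,j)}_{t}$ is, up to mutual absolute continuity, a time-changed Bessel process of dimension $8/\kappa+1\geq2$ and hence never vanishes, so that $\gamma^{(i)}$ neither swallows nor touches a force point $X_{j}$; and (b) local absolute continuity with respect to the chordal $\mathrm{SLE}(\kappa)$: restricted to any bounded capacity interval $[0,T]$, the laws of the two traces are mutually absolutely continuous, the Radon--Nikodym density being the usual Girsanov martingale assembled from $g^{(i)}_{t}$, $\xi^{(i)}_{t}$ and $\zeta^{(i,j)}_{t}$, which by (a) stays positive and finite on $[0,T]$. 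Transience, $\lim_{t\to\infty}|\gamma^{(i)}(t)|=\infty$, I would quote from the flow-line construction of~\cite{MillerSheffield2016a} (for $N=1$ it is transience of ordinary $\mathrm{SLE}(\kappa)$); alternatively $\hcap(K^{(i)}_{t})=2t\to\infty$ already forces $\gamma^{(i)}$ to be unbounded.

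With (a) and (b) at hand I would argue case by case. For $\kappa\in(0,4]$, $\mathrm{SLE}(\kappa)$ is a.s.\ a simple curve meeting $\bR$ only at its tip, hence so is $\gamma^{(i)}|_{[0,T]}$; intersecting (resp.\ unioning) these a.s.\ events over $T\in\bN$ gives $\gamma^{(i)}(0,\infty)\subset\bH$ and simplicity, that is, (1). For $\kappa\in(4,8)$, $\mathrm{SLE}(\kappa)$ a.s.\ has double points and a.s.\ meets $\bR$ already on $[0,T]$ (by scale invariance these probabilities do not depend on $T$ and tend to $1$ as $T\to\infty$, hence equal $1$), so $\gamma^{(i)}$ is self-intersecting and meets $\bR$; moreover the points of $\bR$ that it meets lie inside $(X_{i-1},X_{i+1})$, since reaching a point of $(-\infty,X_{i-1}]\cup[X_{i+1},\infty)$ would, by planar topology, enclose $X_{i-1}$ or $X_{i+1}$ in a bounded complementary component and thus swallow that force point, contradicting (a); equivalently, across any boundary interval lying beyond the nearest force point the accumulated weight is $\geq2\geq\kappa/2-2$, so the corresponding difference process is again a Bessel-type process of dimension $\geq2$ that does not reach $0$. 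This establishes (2). For $\kappa=8$, $\mathrm{SLE}(8)$ is space-filling, so $\gamma^{(i)}$ a.s.\ fills every compact subset of $\bH$ and the interval $(X_{i-1},X_{i+1})$ of $\bR$, while the same topological and Bessel argument (now with $\rho=2=\kappa/2-2$, giving dimension exactly $2$, which still avoids $0$) keeps it away from $(-\infty,X_{i-1}]\cup[X_{i+1},\infty)$; this is (3).

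The genuinely delicate step is the space-filling statement (3). For (1) and (2) the relevant $\mathrm{SLE}(\kappa)$ properties already hold a.s.\ on each bounded capacity interval, so (b) transfers them directly and one simply lets $T\to\infty$; by contrast, the event that $\gamma^{(i)}$ has filled a given compact $K\subset\bH$ has only positive probability on $[0,T]$, and the laws of $\gamma^{(i)}$ and $\mathrm{SLE}(8)$ are \emph{not} mutually absolutely continuous over all of $[0,\infty)$. One must therefore upgrade positive probability to full probability through a Markov-property and Borel--Cantelli argument: restart the Loewner evolution at a sequence of stopping times, note that at each stage the conditional law is again an $\mathrm{SLE}(\kappa,\ul{\rho})$ in a configuration that is not degenerate and fills $K$ with probability bounded below, and conclude that $K$ is a.s.\ filled. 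This is exactly the scheme of~\cite{MillerSheffield2016a}, which I would follow; the uniform-lower-bound estimate at each stage---together with the (standard but nontrivial) verification that the Girsanov density underlying (b) is a true, not merely local, martingale on each $[0,T]$, which rests on the quantitative non-collision of (a) and on control of $|(g^{(i)}_{t})'(X_{j})|$---is where the technical work concentrates, and the borderline value $\kappa=8$ (relevant Bessel dimension exactly $2$, $\mathrm{SLE}(8)$ the critical space-filling case) is correspondingly the most sensitive. All remaining ingredients, namely the phase diagram of $\mathrm{SLE}(\kappa)$ and the weight thresholds governing boundary hitting, are quotations from~\cite{RohdeSchramm2005,LawlerSchrammWerner2004,MillerSheffield2016a,SchrammSheffield2013}.
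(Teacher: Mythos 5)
Your proposal is correct and follows essentially the same route as the paper: transience from Proposition~\ref{prop:absolute_continuity_wrt_Bessel} together with \cite[Theorem~1.3]{MillerSheffield2016a}, and the three phases transferred from the known behaviour of SLE$(\kappa)$ via local absolute continuity of the SLE$(\kappa,\ul{\rho})$ with $\ul{\rho}=(2,\dots,2)$, with the non-vanishing of the Bessel-type differences $\xi^{(i)}_{t}-\zeta^{(i,j)}_{t}$ confining the boundary contact to $(X_{i-1},X_{i+1})$. Two small remarks: your alternative transience argument via $\hcap(K^{(i)}_{t})=2t\to\infty$ only yields unboundedness of the hulls, not $\lim_{t\to\infty}|\gamma^{(i)}(t)|=\infty$, so the citation to \cite{MillerSheffield2016a} is the one to rely on; and your observation that at $\kappa=8$ space-fillingness is not an almost-sure event on any fixed capacity window, so that absolute continuity up to time $T$ must be supplemented by a restarting/Borel--Cantelli argument as in \cite{MillerSheffield2016a}, is a genuine refinement of a step the paper's two-line proof passes over silently.
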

\begin{proof}
The property that $\lim_{t\to\infty}|\gamma^{(i)}(t)|=\infty$ follows from Proposition~\ref{prop:absolute_continuity_wrt_Bessel} and \cite[Theorem~1.3]{MillerSheffield2016a}.
According to \cite[Remark~2.3]{MillerSheffield2016a} (see also \cite{Werner2004b} for a special case),
our SLE$(\kappa,\ul{\rho})$ with $\kappa\in (0,8]$ and $\ul{\rho}=(2,\dots, 2)$ is absolutely continuous with respect to SLE$(\kappa)$
up to any fixed $t$.
Therefore the remaining properties are consequences of the basic properties of SLE$(\kappa)$ \cite{RohdeSchramm2005}.
\end{proof}

\subsection{Proof of Proposition \ref{prop:properties_flowlines}}
As a consequence of Theorem \ref{thm:strong_coupling}, we have a probability law of an $N$-tuple of random curves $\left(\gamma^{(i)}:i=1,\dots, N\right)$, where, for each $i=1,\dots, N$, $\gamma^{(i)}$ is the flow line of the GFF $h$ in (\ref{eq:relevant_GFF}) starting at $X_{i}$ and is adapted to the filtration $(\cF^{(i)}_{t})_{t\geq 0}$.
We are now at the position of proving Proposition \ref{prop:properties_flowlines}.

\begin{proof}[Proof of Proposition \ref{prop:properties_flowlines}]
Owing to the fact that the coupling problem is coordinate free (Proposition \ref{prop:coordinate_freeness}), we can study a conditional law of $\gamma^{(i)}$ with a fixed $i\in \set{1,\dots, N}$ given segments of other curves $\gamma^{(j)}(0,\sigma_{j}]$ up to stopping times $\sigma_{j}$, $j\neq i$. Let us write $\bH_{\ul{\sigma}}=\bH\bsl\bigcup_{j;j\neq i}K^{(j)}_{\sigma_{j}}$. Then,  it is a consequence of the coupling that the conditional law of $\gamma^{(i)}$ is given by the Loewner chain starting at $X_{i}$ in the configuration
\begin{equation*}
	\cC_{\ul{\sigma}}=\left(\bH_{\ul{\sigma}};\gamma^{(1)}_{\sigma_{1}},\dots, X_{i},\dots, \gamma^{(N)}_{\sigma_{N}};\infty\right),
\end{equation*}
coupled with the GFF under the same set of boundary conditions $(\ul{a},\chi)=(-\frac{2}{\sqrt{\kappa}},\dots, -\frac{2}{\sqrt{\kappa}},\frac{2}{\sqrt{\kappa}}-\frac{\sqrt{\kappa}}{2})$.
Taking a conformal map $\psi:\bH_{\ul{\sigma}}\to\bH$ that fixes $\infty$, it is equivalent to consider the Loewner chain starting at $\psi (X_{i})$ in the configuration
\begin{equation*}
	\psi (\cC_{\ul{\sigma}})=\left(\bH;\psi(\gamma^{(1)}_{\sigma_{1}}),\dots, \psi (X_{i}),\dots, \psi (\gamma^{(N)}_{\sigma_{N}});\infty\right)
\end{equation*}
coupled with the GFF under the same set of boundary conditions.
Let us write the corresponding curve starting at $\psi (X_{i})$ as $\pr{\gamma}$.
Then, Proposition~\ref{prop:property_H} implies the following properties of $\pr{\gamma}$, and equivalently, those of the original $\gamma^{(i)}$ conditioned over the other curves $\gamma^{(j)}(0,\sigma_{j}]$, $j\neq i$:
\begin{enumerate}
\item 	When $\kappa\in (0,4]$, $\pr{\gamma}$ is almost surely a simple curve in $\bH$ and $\pr{\gamma}(0,\infty)\subset\bH$. This means that $\gamma^{(i)}$ conditioned over the segments of the other curves $\gamma^{(j)}(0,\sigma_{j}]$, $j\neq i$ does not hit these segments $\gamma^{(j)}(0,\sigma_{j}]$, $j\neq i$.
\item 	When $\kappa\in (4,8)$, $\pr{\gamma}$ is almost surely self-intersecting and hits the interval $(\psi (\gamma^{(i-1)}_{\sigma_{i-1}}),\psi (\gamma^{(i+1)}_{\sigma_{i+1}}))$. These properties are equivalent to that the original $\gamma^{(i)}$ conditioned over the segments of the other curves $\gamma^{(j)}(0,\sigma_{j}]$, $j\neq i$ is self-intersecting and hits the right boundary of the segment $\gamma^{(i-1)}(0,\sigma_{i-1}]$, the left boundary of the segment $\gamma^{(i+1)}(0,\sigma_{i+1}]$ and the interval $(X_{i-1},X_{i+1})$ under the agreement that $\gamma^{(0)}(0,\sigma_{0}]=\gamma^{(N+1)}(0,\sigma_{N+1}]=\emptyset$, $X_{0}=-\infty$ and $X_{N+1}=\infty$.
\item 	When $\kappa=8$, $\pr{\gamma}$ is space-filling away from $\left(-\infty, \psi (\gamma^{(i-1)}_{\sigma_{i-1}})\right]\cup \left[\psi (\gamma^{(i+1)}_{\sigma_{i+1}}),\infty\right)$, which implies that the original $\gamma^{(i)}$ conditioned over the other curves $\gamma^{(j)}(0,\sigma_{j}]$, $j\neq i$ is space-filling, but does not hit $\gamma^{(i-1)}_{\sigma_{i-1}}$ nor $\gamma^{(i+1)}_{\sigma_{i+1}}$, under the agreement that $\gamma^{(0)}_{\sigma_{0}}=-\infty$ and $\gamma^{(N+1)}_{\sigma_{N+1}}=\infty$.
\end{enumerate}
Since $i=1,\dots, N$ and the stopping times $\sigma_{j}$, $j\neq i$ are arbitrary, Proposition \ref{prop:properties_flowlines} follows.
\end{proof}

\section{Proofs of Lemmas \ref{lem:single_param} and \ref{lem:driving_processes}}
\label{sect:proofs_lemmas}
\subsection{Multiple coupling}

For any $\ul{s}\in\bR_{\geq 0}^{N}$, we define the compact $\bH$-hull $\wtilde{K}_{\ul{s}}\subset \bH$ so that $\bH\bsl \wtilde{K}_{\ul{s}}$ is the unbounded component of $\bH\bsl \bigcup_{i=1}^{N}\gamma^{(i)}(0,s_{i}]$ and write $\wtilde{g}_{\ul{s}}$ for the hydrodynamically normalized conformal map from $\bH\bsl \wtilde{K}_{\ul{s}}$ to $\bH$. At each $\ul{s}\in\bR_{\geq 0}^{N}$, we may consider the following GFF on $\bH\bsl \wtilde{K}_{\ul{s}}$:
\begin{equation*}
	h_{\ul{s}}=H\circ \wtilde{g}_{\ul{s}}-\frac{2}{\sqrt{\kappa}}\sum_{i=1}^{N}\arg (\wtilde{g}_{\ul{s}}(\cdot)-\wtilde{X}^{(i)}_{\ul{s}})-\chi \arg (\wtilde{g}_{\ul{s}})^{\prime}(\cdot),
\end{equation*}
where $\wtilde{X}^{(i)}_{\ul{s}}:=\wtilde{g}_{\ul{s}}(\gamma^{(i)}(s_{i}))$, $i=1,\dots, N$.

\begin{prop}
For any $\ul{s}\in \bR_{\geq 0}^{N}$, the conditional law of $h|_{\bH\bsl K_{\ul{s}}}$ given $\cF_{\ul{s}}$ agrees with that of $h_{\ul{s}}$.
\end{prop}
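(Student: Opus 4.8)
The plan is to reduce the multi-parameter statement to the single-parameter coupling established earlier (the Proposition just before Theorem~\ref{thm:strong_coupling} and Theorem~\ref{thm:strong_coupling} itself) by an induction on the number of curve-segments that have been ``revealed'', combined with the coordinate-freeness of the coupling problem (Proposition~\ref{prop:coordinate_freeness}). First I would note that the claim for $\ul{s}$ with at most one nonzero coordinate is exactly the single-curve coupling already proved. For the inductive step, suppose the assertion holds for all $\ul{s}'\leq \ul{s}$ with strictly fewer nonzero coordinates (or, more precisely, one runs the induction along a fixed order $i_{1},\dots,i_{N}$ of the indices, revealing $\gamma^{(i_{1})}(0,s_{i_{1}}]$, then $\gamma^{(i_{2})}(0,s_{i_{2}}]$, and so on). Conditioning first on $\cF^{(i_{1})}_{s_{i_{1}}}\vee\cdots\vee\cF^{(i_{k-1})}_{s_{i_{k-1}}}$, the single-curve coupling gives that the conditional law of $h$ restricted to the complement of $\bigcup_{\ell<k}K^{(i_{\ell})}_{s_{i_{\ell}}}$ is a GFF on that slit domain with the boundary conditions prescribed by $\fh_{\cC,(\ul a,\chi)}$ pushed forward; then by Proposition~\ref{prop:coordinate_freeness} (applied with the conformal map uniformizing that slit domain) the further conditioning on $\gamma^{(i_{k})}(0,s_{i_{k}}]$ behaves exactly as in the single-curve case in the uniformized picture, and conjugating back produces precisely the harmonic function appearing in $h_{\ul s}$.

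The key steps, in order, are: (1) fix an ordering of the $N$ indices and set up the nested sequence of stopping times / revealed hulls; (2) invoke Theorem~\ref{thm:strong_coupling} to identify, at each stage, the relevant $\sigma$-algebra $\cF^{(i_{k})}_{s_{i_{k}}}$ with the GFF-generated $\sigma$-algebra $\cF^{\GFF}_{K^{(i_{k})+}_{s_{i_{k}}}}$, so that ``conditioning on the curve'' and ``conditioning on the field seen from that hull'' coincide and the several conditionings can be composed consistently; (3) at each stage transport the coupling to $\bH$ via a uniformizing map and apply Proposition~\ref{prop:coordinate_freeness}, using that the composition of the stagewise uniformizing maps is $\wtilde g_{\ul s}$ up to the reparametrization, and that $\fh$ transforms additively under composition (the $-\chi\arg\psi'$ terms add up by the chain rule, exactly as in the displayed computation inside the proof of Proposition~\ref{prop:coordinate_freeness}); (4) read off that the resulting harmonic part is $-\tfrac{2}{\sqrt\kappa}\sum_{i}\arg(\wtilde g_{\ul s}(\cdot)-\wtilde X^{(i)}_{\ul s})-\chi\arg(\wtilde g_{\ul s})'(\cdot)$, i.e.\ the conditional law of $h|_{\bH\bsl \wtilde K_{\ul s}}$ given $\cF_{\ul s}$ is that of $h_{\ul s}$. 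One must also check that $\cF_{\ul s}=\cF^{\GFF}_{\wtilde K_{\ul s}^{+}}$ is consistent with the Markov-property decomposition so that the conditional expectations compose as claimed; this is where the strong coupling (Theorem~\ref{thm:strong_coupling}) rather than the weak coupling is essential.

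The main obstacle I expect is step~(2)--(3): justifying that the conditional laws obtained by revealing the curves \emph{one at a time in a fixed order} are independent of the order chosen, and that the iterated conditioning is well-posed, i.e.\ that at each stage the object being conditioned on is measurable with respect to the field restricted to the already-revealed hull. Concretely, after revealing $\gamma^{(i_{1})},\dots,\gamma^{(i_{k-1})}$, the curve $\gamma^{(i_{k})}$ must still be a flow line — a deterministic functional — of the \emph{conditional} GFF on the slit domain, with its own adapted filtration; this requires knowing that the strong-coupling characterization is itself coordinate-free and stable under the Markovian restriction, which follows from Proposition~\ref{prop:coordinate_freeness} together with Theorem~\ref{thm:strong_coupling}, but the bookkeeping of filtrations (verifying that $\gamma^{(i_{k})}$ in the slit domain is adapted to the appropriate sub-filtration of $\cF_{\ul s}$, and that the exit-time measurability conditions defining an $\cF^{(i)}$-adapted Loewner chain survive conditioning) is the delicate part. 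Everything else — the chain-rule identity for $\arg\psi'$, the conformal invariance of the zero-boundary GFF, and the additivity of harmonic boundary data — is routine and parallels the computation already carried out in the proof of Proposition~\ref{prop:coordinate_freeness}.
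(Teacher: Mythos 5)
Your proposal is correct and takes essentially the same route as the paper: the paper's own proof is a one-line appeal to the coordinate-freeness of the coupling problem (Proposition~\ref{prop:coordinate_freeness}), and your argument is simply a careful elaboration of that appeal, revealing the curves one at a time, using Theorem~\ref{thm:strong_coupling} to identify the conditioning $\sigma$-algebras with $\cF^{\GFF}_{K^{(i)+}_{s_i}}$, and transporting the single-curve coupling through the uniformizing maps at each stage. The filtration and order-of-conditioning bookkeeping you flag as delicate is exactly the content the paper suppresses as ``obvious,'' and it is also the same mechanism the paper uses in its proof of Proposition~\ref{prop:properties_flowlines}.
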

\begin{proof}
This is again obvious from the fact that the coupling problem is coordinate free (Proposition \ref{prop:coordinate_freeness}).
\end{proof}

\subsection{Proof of Lemma \ref{lem:single_param}}
We define the function $t\mapsto \ul{s}(t)$ as a solution of the system of ordinary differential equations (ODEs)
\begin{equation}
\label{eq:def_single_param}
	\frac{\del \hcap(\wtilde{K}_{\ul{s}})}{\del s_{i}}\frac{ds_{i}(t)}{dt}=2,\quad t\geq 0, \quad i=1,\dots, N,
\end{equation}
under the initial conditions $s_{i}(0)=0$, $i=1,\dots, N$.
Then the desired property is immediately observed from a standard argument in the Loewner theory. In fact, we have
\begin{equation*}
	\frac{\del}{\del s_{i}}\wtilde{g}_{\ul{s}}(z)=\frac{1}{\wtilde{g}_{\ul{s}}(z)-\wtilde{X}^{(i)}_{\ul{s}}}\frac{\del \hcap(\wtilde{K}_{\ul{s}})}{\del s_{i}},
\end{equation*}
and $\wtilde{g}_{t}=\wtilde{g}_{\ul{s}(t)}$, $\wtilde{X}^{(i)}_{t}=\wtilde{X}^{(i)}_{\ul{s}(t)}$, $i=1,\dots, N$, and $\wtilde{K}_{t}=\wtilde{K}_{\ul{s}(t)}$, $t\geq 0$.
The uniqueness of such a function is obvious from the definition. 
Let us see the existence of a solution to (\ref{eq:def_single_param}).
To this aim, we employ an idea from \cite{LawlerSchrammWerner2003} to estimate the partial derivatives $\frac{\del \hcap(\wtilde{K}_{\ul{s}})}{\del s_{i}}$, $i=1,\dots, N$.
Let us take an arbitrary instance of the GFF (\ref{eq:relevant_GFF}). Then, Theorem~\ref{thm:strong_coupling} ensures that instances of $\gamma^{(i)}$, $i=1,\dots, N$ are determined.
We fix $i\in \set{1,\dots, N}$ and $s_{1},\dots, s_{i-1},s_{i+1},\dots, s_{N}\in\bR_{\geq 0}$, and set $A=\wtilde{K}_{(s_{1},\dots, s_{i-1},0,s_{i+1},\dots, s_{N})}$. Then, due to  Proposition~\ref{prop:properties_flowlines}, the curve $\gamma^{(i)}$ lies in $\ol{\bH\bsl A}$. At each $s_{i}\geq 0$, we take the compact $\bH$-hull $K^{(i)}_{A,s_{i}}$ so that $\bH\bsl K^{(i)}_{A,s_{i}}$ is the unbounded component of $\bH\bsl g_{A}(K^{(i)}_{s_{i}})$ and write $g^{(i)}_{A,s_{i}}:=g_{K^{(i)}_{A,s_{i}}}$.
Here, $\left(K^{(i)}_{t}:t\geq 0\right)$ is the family of compact $\bH$-hulls starting at $X_{i}$ that is generated by the instance of $(g^{(i)}_{t}:t\geq 0)$, the SLE$(\kappa,\ul{\rho})$ with $\ul{\rho}=(2,\dots, 2)$ determined by the instance of the GFF. Notice that the image $g_{A}(K^{(i)}_{s_{i}})$, $s_{i}\geq 0$ is not a compact $\bH$-hull if $\gamma^{(i)}(0,s_{i}]$ has hit $A$.
 On the other hand, at each $s_{i}\geq 0$, we define the compact $\bH$-hull $A^{(i)}_{s_{i}}$ so that $\bH\bsl A^{(i)}_{s_{i}}$ is the unbounded component of $\bH\bsl g^{(i)}_{s_{i}}(A)$ and set $f^{(i)}_{s_{i}}:=g_{A^{(i)}_{s_{i}}}$. Again, the image $g^{(i)}_{s_{i}}(A)$, $s_{i}\geq 0$ is not necessarily a compact $\bH$-hull.
Then, we have
\begin{equation}
\label{eq:commutativity_conformal_maps}
	g^{(i)}_{A,s_{i}}\circ g_{A}=f^{(i)}_{s_{i}}\circ g^{(i)}_{s_{i}}, \quad s_{i}\geq 0.
\end{equation}
It follows from the additivity (\ref{eq:additivity_hcap}) of the half-plane capacity that
\begin{equation*}
	\hcap \left(\wtilde{K}_{\ul{s}}\right)=\hcap \left(K^{(i)}_{A,s_{i}}\right)+\hcap (A), \quad \ul{s}=(s_{1},\dots, s_{i-1},s_{i},s_{i+1},\dots, s_{N}), \quad s_{i}\geq 0.
\end{equation*}
Recall that we have assumed that
\begin{equation*}
	\hcap \left(g^{(i)}_{s_{i}}\left(\wtilde{K}_{(s_{1},\dots, s_{i}+\Delta t,\dots, s_{N})}\bsl \wtilde{K}_{\ul{s}}\right)\right)=2\Delta t+ \mathrm{o}(\Delta t), \quad \Delta t \to 0.
\end{equation*}
We can also observe from (\ref{eq:commutativity_conformal_maps}) that
\begin{equation*}
	g^{(i)}_{A,s_{i}}\left(K^{(i)}_{A,s_{i}+\Delta t}\bsl K^{(i)}_{A,s_{i}}\right)=f^{(i)}_{s_{i}}\left(g^{(i)}_{s_{i}}\left(\wtilde{K}_{(s_{1},\dots, s_{i}+\Delta t,\dots, s_{N})}\bsl \wtilde{K}_{\ul{s}}\right)\right),
\end{equation*}
from which and due to the scaling property (\ref{eq:scaling_hcap}) of the half-plane capacity it follows that
\begin{equation*}
	\hcap\left(g^{(i)}_{A,s_{i}}\left(K^{(i)}_{A,s_{i}+\Delta t}\bsl K^{(i)}_{A,s_{i}}\right)\right)=\left((f^{(i)}_{s_{i}})^{\prime}(\xi^{(i)}_{s_{i}})\right)^{2}2\Delta t+\mathrm{o}(\Delta t), \quad \Delta t\to 0.
\end{equation*}
Therefore, we have $\frac{\del}{\del s_{i}}\hcap \left(\wtilde{K}_{\ul{s}}\right)=2\left((f^{(i)}_{s_{i}})^{\prime}(\xi^{(i)}_{s_{i}})\right)^{2}>0$.
This implies that the system of ODEs (\ref{eq:def_single_param}) has a local solution starting at arbitrary initial conditions so that each $s_{i}(t)$, $i=1,\dots, N$ is non-decreasing in $t$.

If the solution starting at $s_{i}(0)=0$, $i=1,\dots, N$ diverges at a finite $T$, i.e., there is an $i\in\set{1,\dots, N}$ such that $s_{i}(t)\to\infty$, as $t\nearrow T$, it contradicts that $\hcap \left(\wtilde{K}_{t}\right)=2Nt$, $t\geq 0$.
It is also seen that $s_{i}(t)\to \infty$ as $t\to\infty$, $i=1,\dots, N$. Indeed, if $s_{i}(t)$ for some $i\in\set{1,\dots, N}$ converges to a finite value as $t\to\infty$, then it means that $\frac{\del \hcap(\wtilde{K}_{\ul{s}})}{ds_{i}}$ diverges.
Hence, Lemma \ref{lem:single_param} has been proved.

\subsection{Proof of Lemma \ref{lem:driving_processes}}
It is obvious from the definition of the function $t\mapsto \ul{s}(t)$ as a solution to the system of differential equations (\ref{eq:def_single_param}) that, for each $t\geq 0$, $\ul{s}(t)$ is an $\left(\cF_{\ul{s}}\right)_{\ul{s}\in\bR_{\geq 0}^{N}}$-stopping time. Then, the $\sigma$-algebra $\cF_{t}:=\cF_{\ul{s}(t)}$, $t\geq 0$ is defined in the standard manner and $(\cF_{t})_{t\ge 0}$ forms a filtration, to which $(\wtilde{X}^{(i)}_{t}:t\ge 0)$, $i=1,\dots, N$ are adapted.
Our goal is to show that the processes $(\wtilde{X}^{(i)}_{t}:t\ge 0)$, $i=1,\dots, N$ give a weak solution to the system of SDEs (\ref{eq:Dyson_timechanged}), which is divided into two parts. Firstly, set $\cC_{t}:=\left(\wtilde{\bH}_{t};\gamma^{(1)}_{s_{1}(t)},\dots, \gamma^{(N)}_{s_{N}(t)};\infty\right)$, $t\geq 0$ and consider the following stochastic process
\begin{equation*}
	\fh_{t}(\cdot):=\fh_{\cC_{t};(\ul{a},\chi)}(\cdot)=-\frac{2}{\sqrt{\kappa}}\sum_{i=1}^{N}\arg (\wtilde{g}_{t}(\cdot)-\wtilde{X}^{(i)}_{t})-\chi \arg(\wtilde{g}_{t})^{\prime}(\cdot),\quad t\ge 0.
\end{equation*}
The coupling condition implies the following property:
\begin{prop}
\label{prop:localmartingale}
The stochastic process $(\fh_{t}(\cdot):t\ge 0)$ is a family of continuous local martingales with cross variation
\begin{equation*}
	d\braket{\fh(z),\fh(w)}_{t}=-dG(\wtilde{g}_{t}(z),\wtilde{g}_{t}(w)),\ \ z,w\in \bH,\ \ 0\leq t\leq \tau_{z}\wedge\tau_{w},
\end{equation*}
where $G(z, w):=\log \big|\frac{z-\ol{w}}{z-w}\big|$ is the Green's function on $\bH$.
\end{prop}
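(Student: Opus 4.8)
The plan is to derive both statements directly from the coupling, without presupposing anything about the law of the driving processes $\wtilde X^{(i)}_{t}$ (extracting that law is precisely what Lemma~\ref{lem:driving_processes} does, using the present proposition). The key remark is that, after pairing with a test function, $\fh_{t}$ is a conditional mean of the GFF. Fix $\rho\in C^{\infty}_{0}(\bH)$; by the conditional-law identity stated just before Proposition~\ref{prop:localmartingale}, for every $t$ with $\supp\rho\subset\bH\bsl\wtilde K_{t}$ one has $(\fh_{t},\rho)=\bE[(h,\rho)\mid\cF_{t}]$, since $\fh_{t}$ is the $\cF_{t}$-measurable mean of the conditional law of $h|_{\bH\bsl\wtilde K_{t}}$. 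Because $(h,\rho)=(H,\rho)+(\fh_{0},\rho)$ is a square-integrable variable and the $\sigma$-algebras $\cF_{t}$ increase in $t$, the tower property shows that $t\mapsto(\fh_{t},\rho)$ is a continuous $L^{2}$-martingale on $\{t:\supp\rho\subset\bH\bsl\wtilde K_{t}\}$, with continuity in $t$ inherited from continuity of $\wtilde g_{t}$ and of the curves. Exhausting $\bH$ by such test functions and using that $\fh_{t}$ is harmonic, hence determined by its pairings, we conclude that for each fixed $z$ the real process $(\fh_{t}(z):0\le t<\tau_{z})$ is a continuous local martingale.

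For the cross variation I would run a Doob--Meyer argument fed by the second-moment structure of the coupling. Fix $\rho,\tilde\rho\in C^{\infty}_{0}(\bH)$ and work at times $t\le T$ with both supports compactly inside $\bH\bsl\wtilde K_{T}$. Conditionally on $\cF_{t}$ the coupling identifies the law of $h|_{\bH\bsl\wtilde K_{t}}$ with that of $H\circ\wtilde g_{t}+\fh_{t}$, $H$ a zero-boundary GFF on $\bH$; hence the conditional mean is $\fh_{t}$ and the conditional covariance kernel is the Green's function of $\bH\bsl\wtilde K_{t}$, i.e. $(z,w)\mapsto G(\wtilde g_{t}(z),\wtilde g_{t}(w))$ by conformal invariance of $G$. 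Therefore $\bE[(h,\rho)(h,\tilde\rho)\mid\cF_{t}]=(\fh_{t},\rho)(\fh_{t},\tilde\rho)+\iint\rho(z)\tilde\rho(w)\,G(\wtilde g_{t}(z),\wtilde g_{t}(w))\,dz\,dw$. Applying $\bE[\,\cdot\mid\cF_{s}]$ for $s\le t$ and subtracting the same identity at time $s$ yields, with $V_{t}:=\iint\rho(z)\tilde\rho(w)\bigl(G(z,w)-G(\wtilde g_{t}(z),\wtilde g_{t}(w))\bigr)\,dz\,dw$,
\begin{equation*}
	\bE\bigl[(\fh_{t},\rho)(\fh_{t},\tilde\rho)\bigm|\cF_{s}\bigr]-(\fh_{s},\rho)(\fh_{s},\tilde\rho)=\bE\bigl[V_{t}-V_{s}\bigm|\cF_{s}\bigr],
\end{equation*}
so $(\fh_{\cdot},\rho)(\fh_{\cdot},\tilde\rho)-V$ is a martingale. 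Since $\wtilde g_{t}(z)$ solves the multiple Loewner ODE, $t\mapsto G(\wtilde g_{t}(z),\wtilde g_{t}(w))$ is $C^{1}$, so $V$ is continuous, of finite variation, and (when $\tilde\rho=\rho$) nondecreasing with $V_{0}=0$; uniqueness in the Doob--Meyer decomposition then forces $\braket{(\fh(\cdot),\rho),(\fh(\cdot),\tilde\rho)}_{t}=V_{t}$. Approximating $\rho,\tilde\rho$ by mollifiers tending to $\delta_{z},\delta_{w}$ and using joint continuity of $G$ on $(\bH\bsl\wtilde K_{t})^{2}$ for $t<\tau_{z}\wedge\tau_{w}$ upgrades this to $d\braket{\fh(z),\fh(w)}_{t}=-dG(\wtilde g_{t}(z),\wtilde g_{t}(w))$.

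The step I expect to be the most delicate is not any single computation but the bookkeeping of localisation: $\fh_{t}(z)$ lives only up to the swallowing time $\tau_{z}$, test-function supports are eventually absorbed by the growing hull $\wtilde K_{t}$, and the martingale and Doob--Meyer identities must be invoked on the correct random time windows and then patched together; this uses continuity of $t\mapsto\wtilde K_{t}$ and the strict positivity $\frac{\del}{\del s_{i}}\hcap(\wtilde K_{\ul s})=2\bigl((f^{(i)}_{s_{i}})^{\prime}(\xi^{(i)}_{s_{i}})\bigr)^{2}>0$ from the proof of Lemma~\ref{lem:single_param}. An alternative, more hands-on route would invoke coordinate-freeness (Proposition~\ref{prop:coordinate_freeness}) to reduce to a single Loewner chain, use the explicit semimartingale decomposition of the SLE$(\kappa,\ul\rho)$ driving function from Section~\ref{sect:flowline}, and check by It\^{o}'s formula that the drift of $\fh^{(i)}_{t}(z)$ vanishes exactly because $a_{i}=-\tfrac{2}{\sqrt\kappa}$ and $\chi=\tfrac{2}{\sqrt\kappa}-\tfrac{\sqrt\kappa}{2}$, the quadratic variation then coming out as $-dG$; I prefer the conditional-expectation argument because it never presupposes the form of the driving SDE and avoids carrying the time change $s_{i}(t)$ through the computation.
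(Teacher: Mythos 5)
Your proposal is correct and follows essentially the same route as the paper: both parts are derived from the conditional-law identity of the coupling, the local-martingale property via the tower property applied to $\bE[(h,f)\mid\cF_{t}]=(\fh_{t},f)$ with the same localization by the exit time of $\supp f$ from the complement of the hull, and the cross variation via the conditional covariance $E_{t}(f_{1},f_{2})=\iint f_{1}(z)G(\wtilde g_{t}(z),\wtilde g_{t}(w))f_{2}(w)\,dz\,dw$ of the zero-boundary part. Your Doob--Meyer formalization of the bracket identification is a slightly more explicit rendering of the paper's terser statement that $(\fh_{t},f_{i})$ must have covariance $E_{0}-E_{t}$, but it is the same underlying argument.
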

\begin{proof}
From the coupling condition, the conditional law of $h|_{\bH\bsl K_{t}}$ given $\cF_{t}$ agrees with that of $H\circ g_{t}+\fh_{t}$.
Let $f\in C^{\infty}_{0}(D)$ be a test function and $\varepsilon>0$ be a constant.
Setting $\tau_{f,\varepsilon}=\inf\set{t>0:\dist(K_{t},\supp (f))\le \varepsilon}$, we can see that $M_{t}:=\bE\left[(h,f)|\cF_{t\wedge\tau_{f,\varepsilon}}\right]$, $t\ge 0$ is a martingale. 
Recall that, from Theorem \ref{thm:strong_coupling}, we have $\cF_{t}=\bigvee_{i=1}^{N}\cF^{(i)}_{s_{i}(t)}=\bigvee_{i=1}^{N}\cF^{\GFF}_{K_{s_{i}(t)}^{(i)+}}$, $t\geq 0$, which implies that $M_{t}=(\fh_{t\wedge \tau_{f,\varepsilon}},f)$, $t\geq 0$. Hence, $(\fh_{t}:t\ge 0)$ is a local martingale.
For two test functions $f_{1}$ and $f_{2}$ in $\bH\bsl K_{t}$ we set
\begin{equation*}
	E_{t}(f_{1},f_{2}):=\int_{\supp (f_{1})\times \supp (f_{2})}f_{1}(z)G(\wtilde{g}_{t}(z),\wtilde{g}_{t}(w))f_{2}(w)dzdw, \quad 0\leq t\leq \tau_{f_{1},\varepsilon}\wedge \tau_{f_{2},\varepsilon}.
\end{equation*}
Note that $(h,f_{i})$, $i=1,2$ are Gaussian variables with mean $(\fh_{0},f_{i})$, $i=1,2$, respectively and their covariance is $E_{0}(f_{1},f_{2})$. On the other hand, $(H\circ \wtilde{g}_{t},f_{i})$, $i=1,2$ are Gaussian variables with mean $0$ and their covariance is $E_{t}(f_{1}, f_{2})$. Hence, $(\fh_{t},f_{i})$, $i=1,2$ must be Gaussian variables with mean $(\fh_{0},f_{i})$, $i=1,2$, respectively, and covariance $E_{0}(f_{1},f_{2})-E_{t}(f_{1},f_{2})$. Therefore, the cross variation of $(\fh_{t}:t\ge 0)$ must be as desired.
\end{proof}

Therefore, the proof of Lemma \ref{lem:driving_processes} is reduced to proving the following result:
\begin{prop}
Suppose that a Loewner chain $(g_{t}:t\ge 0)$ solves the multiple Loewner equation (\ref{eq:multiple_Loewner}).
Consider the stochastic process
\begin{equation*}
	\fh_{t}(\cdot):=-\frac{2}{\sqrt{\kappa}}\sum_{i=1}^{N}\arg (g_{t}(\cdot) -X^{(i)}_{t})-\chi \arg \pr{g}_{t}(\cdot),\ \ t\ge 0.
\end{equation*}
Then $(\fh_{t}(\cdot):t\ge 0)$ is a family continuous local martingales with cross variation
\begin{equation*}
	d\braket{\fh(z),\fh(w)}_{t}=-d G(g_{t}(z),g_{t}(w)),\quad z,w\in \bH,\quad  0\leq t\leq \tau_{z}\wedge \tau_{w},
\end{equation*}
if and only if the driving processes $(X^{(i)}_{t}:t\ge 0)$, $i=1,\dots, N$ solve the system of SDEs (\ref{eq:Dyson_timechanged}).
\end{prop}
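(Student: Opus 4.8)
The plan is to reduce the equivalence to one application of It\^o's formula followed by a matching of coefficients of rational functions. We assume the driving processes $\left(X^{(i)}_{t}:t\geq 0\right)$ to be continuous semimartingales adapted to the underlying filtration (this is the case in our application, where it is inherited from the SLE$(\kappa,\ul{\rho})$ structure of the individual flow lines and the reparametrization of Lemma~\ref{lem:single_param}), and we write $dX^{(i)}_{t}=b^{(i)}_{t}\,dt+dM^{(i)}_{t}$ with $M^{(i)}$ continuous local martingales and $d\braket{M^{(i)},M^{(j)}}_{t}=a^{(ij)}_{t}\,dt$. Fix $z\in\bH$ and set $Z_{t}=g_{t}(z)$. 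From the multiple Loewner equation (\ref{eq:multiple_Loewner}) we have the finite-variation dynamics $dZ_{t}=\sum_{j}\frac{2}{Z_{t}-X^{(j)}_{t}}\,dt$, and differentiating it in $z$ gives $\frac{d}{dt}\log\pr{g}_{t}(z)=-\sum_{j}\frac{2}{(Z_{t}-X^{(j)}_{t})^{2}}$, again of finite variation. Putting
\begin{equation*}
	\Phi_{t}(z):=-\frac{2}{\sqrt{\kappa}}\sum_{i=1}^{N}\log(Z_{t}-X^{(i)}_{t})-\chi\log\pr{g}_{t}(z),
\end{equation*}
so that $\fh_{t}(z)=\rmIm\,\Phi_{t}(z)$, an application of It\^o's formula together with the partial-fraction identity $\frac{1}{(Z-a)(Z-b)}=\frac{1}{a-b}\big(\frac{1}{Z-a}-\frac{1}{Z-b}\big)$ (used to reorganize the double sum that arises) yields, for $0\leq t\leq\tau_{z}$,
\begin{equation*}
	d\Phi_{t}(z)=D(Z_{t})\,dt+\frac{2}{\sqrt{\kappa}}\sum_{i=1}^{N}\frac{dM^{(i)}_{t}}{Z_{t}-X^{(i)}_{t}},
\end{equation*}
where $D$ is a rational function of $Z$ whose coefficients depend on $t$ only through $X^{(i)}_{t}$, $b^{(i)}_{t}$, $a^{(ii)}_{t}$, with coefficient $\frac{a^{(ii)}_{t}-4}{\sqrt{\kappa}}+2\chi$ in front of $(Z-X^{(i)}_{t})^{-2}$ and coefficient $\frac{2}{\sqrt{\kappa}}\big(b^{(i)}_{t}-\sum_{j\neq i}\frac{4}{X^{(i)}_{t}-X^{(j)}_{t}}\big)$ in front of $(Z-X^{(i)}_{t})^{-1}$. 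Taking imaginary parts, $d\fh_{t}(z)=\rmIm[D(Z_{t})]\,dt+\frac{2}{\sqrt{\kappa}}\sum_{i}\rmIm\!\big(\frac{1}{Z_{t}-X^{(i)}_{t}}\big)\,dM^{(i)}_{t}$.

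The next step is to observe that $\left(\fh_{t}(z):0\leq t\leq\tau_{z}\right)$ is a local martingale for every $z\in\bH$ if and only if $D\equiv 0$. Indeed, the finite-variation part of the semimartingale $\fh_{t}(z)$ must vanish; varying $z$ over a countable dense set (and using that $g_{t}$ maps a dense subset of $\bH\bsl K_{t}$ onto a dense subset of $\bH$) shows this is equivalent to $\rmIm\,D\equiv 0$ on $\bH$; and since for fixed $t$ the rational function $D$ is holomorphic on $\bH$ (its poles lie on $\bR$) and tends to $0$ as $Z\to\infty$ within $\bH$, $\rmIm\,D\equiv 0$ forces $D$ to be a real constant, hence $D\equiv 0$. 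As the functions $\big\{(Z-X^{(i)}_{t})^{-1},(Z-X^{(i)}_{t})^{-2}:i=1,\dots,N\big\}$ are linearly independent over $\bC$, $D\equiv 0$ is in turn equivalent to the vanishing of both families of coefficients, that is, using $\chi=\frac{2}{\sqrt{\kappa}}-\frac{\sqrt{\kappa}}{2}$,
\begin{equation*}
	a^{(ii)}_{t}=4-2\sqrt{\kappa}\,\chi=\kappa,\qquad b^{(i)}_{t}=\sum_{j\neq i}\frac{4}{X^{(i)}_{t}-X^{(j)}_{t}},\qquad i=1,\dots,N;
\end{equation*}
conversely these two identities make $D$ vanish identically.

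For the cross variation, the martingale part found above gives
\begin{equation*}
	d\braket{\fh(z),\fh(w)}_{t}=\frac{4}{\kappa}\sum_{i,j}a^{(ij)}_{t}\,\rmIm\!\Big(\frac{1}{g_{t}(z)-X^{(i)}_{t}}\Big)\rmIm\!\Big(\frac{1}{g_{t}(w)-X^{(j)}_{t}}\Big)\,dt
\end{equation*}
for $0\leq t\leq\tau_{z}\wedge\tau_{w}$. On the other hand, with $G(Z,W)=\log\big|\tfrac{Z-\ol{W}}{Z-W}\big|=\mathrm{Re}\big[\log(Z-\ol{W})-\log(Z-W)\big]$ and using only the finite-variation Loewner dynamics of $g_{t}(z)$ and $g_{t}(w)$, a short direct computation (a Hadamard-type identity) gives $\frac{d}{dt}G(g_{t}(z),g_{t}(w))=-4\sum_{k}\rmIm\big(\frac{1}{g_{t}(z)-X^{(k)}_{t}}\big)\rmIm\big(\frac{1}{g_{t}(w)-X^{(k)}_{t}}\big)$, so $-dG(g_{t}(z),g_{t}(w))$ equals $4\sum_{k}\rmIm(\cdot)\,\rmIm(\cdot)\,dt$. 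Comparing the two displays and invoking the linear independence over $\bR$ of the functions $(Z,W)\mapsto\rmIm\tfrac{1}{Z-X^{(i)}_{t}}\cdot\rmIm\tfrac{1}{W-X^{(j)}_{t}}$ on $\bH\times\bH$ (which follows, as above, from the fact that a holomorphic function on $\bH$ with vanishing imaginary part is constant), the prescribed cross variation is equivalent to $a^{(ij)}_{t}=\kappa\,\delta_{ij}$. Then $B^{(i)}_{t}:=\kappa^{-1/2}M^{(i)}_{t}$ are continuous local martingales with $B^{(i)}_{0}=0$ and $\braket{B^{(i)},B^{(j)}}_{t}=\delta_{ij}t$, so by the multidimensional L\'evy characterization $B^{(1)},\dots,B^{(N)}$ are independent standard Brownian motions. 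Together with the drift identity of the previous paragraph, the two coupling conditions therefore hold if and only if $dX^{(i)}_{t}=\sqrt{\kappa}\,dB^{(i)}_{t}+\sum_{j\neq i}\frac{4}{X^{(i)}_{t}-X^{(j)}_{t}}\,dt$ with independent standard Brownian motions $B^{(i)}$, which is exactly the system (\ref{eq:Dyson_timechanged}).

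I expect the main difficulties to be of two kinds. Structurally, one must ensure that the driving processes may legitimately be treated as continuous semimartingales so that It\^o calculus applies in the ``only if'' direction; in the present setting this is inherited from the SLE$(\kappa,\ul{\rho})$ description of the individual flow lines and the reparametrization of Lemma~\ref{lem:single_param}, but the point should be recorded with care. Computationally, the burden is in the It\^o expansion of $\Phi_{t}(z)$ and the reorganization of $\sum_{i,j}(Z-X^{(i)}_{t})^{-1}(Z-X^{(j)}_{t})^{-1}$ via partial fractions, keeping every constant straight so that the cancellations $4-2\sqrt{\kappa}\,\chi=\kappa$ and $\frac{\kappa-4}{\sqrt{\kappa}}+2\chi=0$ and the factor $4$ in the Hadamard identity line up; this is routine but must be error-free for the coefficient matching to close.
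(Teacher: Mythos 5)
Your computation is correct and follows essentially the same route as the paper: pass from $\arg$ to $\log$, apply It\^o's formula to $\Phi_{t}(z)=-\tfrac{2}{\sqrt{\kappa}}\sum_{i}\log(g_{t}(z)-X^{(i)}_{t})-\chi\log\pr{g}_{t}(z)$, reorganize the double sum with the residue identity $\tfrac{1}{2}\sum_{i}\tfrac{1}{z-x_{i}}\sum_{j\neq i}\tfrac{1}{z-x_{j}}=\sum_{i}\tfrac{1}{z-x_{i}}\sum_{j\neq i}\tfrac{1}{x_{i}-x_{j}}$, and match coefficients at the first- and second-order poles; the constants ($a^{(ii)}_{t}=4-2\sqrt{\kappa}\chi=\kappa$, the drift, and the factor $4$ in the Hadamard identity for $G$) all check out, and your treatment of the cross variation via linear independence of the products $\rmIm\tfrac{1}{\cdot-X^{(i)}_{t}}\rmIm\tfrac{1}{\cdot-X^{(j)}_{t}}$ is a clean way to extract $a^{(ij)}_{t}=\kappa\delta_{ij}$.

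The one genuine gap is the point you yourself flag and then wave past: in the ``only if'' direction you \emph{assume} that the $X^{(i)}_{t}$ are continuous semimartingales with absolutely continuous drift and bracket, asserting that this is ``inherited from'' the SLE$(\kappa,\ul{\rho})$ structure and the reparametrization of Lemma~\ref{lem:single_param}. That inheritance is not obvious: the time change $t\mapsto s_{i}(t)$ is random and the processes $\wtilde{X}^{(i)}_{t}=\wtilde{g}_{t}(\gamma^{(i)}(s_{i}(t)))$ are not a priori semimartingales, and this is precisely the situation in which the proposition is invoked to prove Lemma~\ref{lem:driving_processes}. The paper closes this by \emph{deriving} semimartingality from the hypothesis itself: evaluating $\ol{\fh}_{t}$ at $N$ generic points $z_{1},\dots,z_{N}$, noting that $g_{t}(z_{j})$ and $\log\pr{g}_{t}(z_{j})$ are finite-variation while $\ol{\fh}_{t}(z_{j})$ are continuous local martingales, and applying the implicit function theorem to express each $X^{(i)}_{t}$ as a $C^{\infty}$ function of these, so that It\^o's theorem makes it a semimartingale. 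You should either incorporate this step or supply an independent proof that the driving processes in the intended application are semimartingales; without one of these, the ``only if'' implication does not apply where it is needed.
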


\begin{proof}
It can be argued that $(\fh_{t}(\cdot):t\ge 0)$ is a continuous local martingale if and only if so is
\begin{equation*}
	\ol{\fh}_{t}(\cdot):=-\frac{2}{\sqrt{\kappa}}\sum_{i=1}^{N}\log (g_{t}(\cdot) -X_{t}^{(i)})-\chi \log \pr{g}_{t}(\cdot),\ \ t\ge 0.
\end{equation*}
Note that $\fh_{t}(\cdot)=\rmIm \ol{\fh}_{t}(\cdot)$, $t\ge 0$.
From the multiple Loewner equation (\ref{eq:multiple_Loewner}), we have
\begin{equation*}
	\frac{d}{dt}\log \pr{g}_{t}(z)=-\sum_{i=1}^{N}\frac{2}{(g_{t}(z)-X^{(i)}_{t})^{2}}, \quad z\in \bH \quad 0\leq t\leq \tau_{z}.
\end{equation*}
Let us take generic points $z_{1},\dots, z_{N}\in\bH$ and set $\tau_{\ul{z}}=\tau_{z_{1}}\wedge\cdots\wedge \tau_{z_{N}}$.
Then, we know that $\ol{\fh}_{t}(z_{j})$, $j=1,\dots, N$ are continuous local martingales and $g_{t}(z_{j})$, $\log \pr{g}_{t}(z_{j})$, $j=1,\dots, N$ are finite variational processes up to $\tau_{\ul{z}}$.
Owing to the implicit function theorem, we can see that each $X^{(i)}_{t}$, $i=1,\dots, N$ is a $C^{\infty}$ function of $\ol{\fh}_{t}(z_{j})$, $g_{t}(z_{j})$, $\log \pr{g}_{t}(z_{j})$, $j=1,\dots, N$ up to $\tau_{\ul{z}}$. Therefore, it is a semi-martingale due to It{\^o}'s theorem.
We write $X^{(i)}_{t}=M^{(i)}_{t}+F^{(i)}_{t}$, $t\ge 0$, $i=1,\dots, N$, where $(M^{(i)}_{t}:t\ge 0)$, $i=1,\dots, N$ are local martingales and $(F^{(i)}_{t}:t\ge 0)$, $i=1,\dots, N$ are finite variation processes. Then, we compute the stochastic derivative of $(\ol{\fh}_{t}(\cdot):t\ge 0)$ to see
\begin{align*}
	d\ol{\fh}_{t}(z)=&\sum_{i=1}^{N}\frac{1}{(g_{t}(z)-X^{(i)}_{t})^{2}}\left(\left(-\frac{4}{\sqrt{\kappa}}+2\chi\right)dt+\frac{1}{\sqrt{\kappa}}d\braket{M^{(i)},M^{(i)}}_{t}\right) \\
	&+\frac{2}{\sqrt{\kappa}}\sum_{i=1}^{N}\frac{1}{g_{t}(z)-X^{(i)}_{t}}\Biggl(dF^{(i)}_{t}-\sum_{\substack{j=1 \\ j\neq i}}^{N}\frac{4}{X^{(i)}_{t}-X^{(j)}_{t}}dt\Biggr) \\
	&+\frac{2}{\sqrt{\kappa}}\sum_{i=1}^{N}\frac{1}{g_{t}(z)-X^{(i)}_{t}}dM^{(i)}_{t}, \quad z\in \bH, \quad 0\leq t\leq \tau_{z}.
\end{align*}
Here we used the identity
\begin{equation*}
	\frac{1}{2}\sum_{i=1}^{N}\frac{1}{z-x_{i}}\sum_{\substack{j=1\\ j\neq i}}^{N}\frac{1}{z-x_{j}}=\sum_{i=1}^{N}\frac{1}{z-x_{i}}\sum_{\substack{j=1\\ j\neq i}}^{N}\frac{1}{x_{i}-x_{j}}
\end{equation*}
between rational functions, which can be shown either by a direct manipulation or comparing the residues at the poles at $z=x_{i}$, $i=1,\dots, N$.

The contribution from the pole of second order at $g_{t}(z)=X^{(i)}_{t}$, $i=1,\dots, N$ forces the quadratic variation of the local martingale to be $d\braket{M^{(i)},M^{(i)}}_{t}=\kappa dt$, $i=1,\dots, N$. Looking at the pole of first order at $g_{t}(z)=X^{(i)}_{t}$, $i=1,\dots, N$, we conclude that $dF^{(i)}_{t}=\sum_{j=1;\;j\neq i}^{N}\frac{4}{X^{(i)}_{t}-X^{(j)}_{t}}dt$, $t\ge 0$, $i=1,\dots, N$.
Furthermore, the cross variation of $(\fh_{t}(\cdot):t\ge 0)$ is computed as
\begin{align*}
	d\braket{\fh(z),\fh(w)}_{t}
	=&-dG(g_{t}(z),g_{t}(w)) \\
	&+\frac{4}{\kappa}\sum_{\substack{i,j=1 \\i\neq j}}^{N}\rmIm\left(\frac{1}{g_{t}(z)-X^{(i)}_{t}}\right)\rmIm\left(\frac{1}{g_{t}(w)-X^{(j)}_{t}}\right)d\braket{M^{(i)},M^{(j)}}_{t}, \\
	&z,w\in \bH, \quad 0\leq t\leq \tau_{z}\wedge \tau_{w}.
\end{align*}
Hence, we have $d\braket{M^{(i)},M^{(j)}}_{t}=0$, $t\ge 0$, $i\neq j$.
\end{proof}

\bibliographystyle{alpha}
\bibliography{sle_gff}

\end{document}